\documentclass[12pt,reqno]{amsart}
\addtolength\oddsidemargin{-.5in}
\addtolength\evensidemargin{-.5in}
\addtolength\textwidth{1.0in}
\addtolength\textheight{1.0in}
\usepackage{amsmath,amssymb}

\def\P{\mathbb P}%
\def\C{\mathbb C}%
\def\S{\mathbb S}%
\def\cP{\mathcal P}%
\def\cQ{\mathcal Q}%
\def\cD{\mathcal D}%
\def\F{\mathcal F}%
\def\cO{\mathcal O}%
\def\x{\bar x}
\def\tY{\tilde Y}%
\def\ty{\tilde y}%
\def\y{\bar y}
\def\q{\bar q}
\def\tq{\tilde q}
\def\bP{\bold P}
\def\bq{\bold q}
\def\bQ{\bold Q}
\def\Sing{\operatorname{Sing}}%
\def\Supp{\operatorname{Supp}}%
\def\Sm{\operatorname{Sm}}%
\def\Ker{\operatorname{Ker}}%
\def\codim{\operatorname{codim}}%
\def\id{\operatorname{Id}}%
\def\rk{\operatorname{rk}}%
\def\cork{\operatorname{cork}}%
\def\<{\langle}%
\def\>{\rangle}%
\def\Def{\operatorname{def}}%
\def\pDef{\operatorname{pdef}}%
\def\Hom{\operatorname{Hom}}%
\def\Sym{\operatorname{Sym}}%
\def\GL{\operatorname{GL}}%
\def\gl{\operatorname{\mathfrak gl}}%
\def\g{\mathfrak g}
%
\theoremstyle{plain}
\newtheorem{thm}{Theorem}[section]
\newtheorem{theorem}[thm]{Theorem}
\newtheorem{lemma}[thm]{Lemma}
\newtheorem{corollary}[thm]{Corollary}
\newtheorem{proposition}[thm]{Proposition}
\theoremstyle{definition}

\newtheorem{remark}[thm]{Remark}
\newtheorem{definition}[thm]{Definition}
\newtheorem{example}[thm]{Example}

\newtheorem*{question}{Question}
\newtheorem*{main}{Main Theorem}
\numberwithin{equation}{section}
\def\aut{\operatorname{\mathfrak aut}}%
\def\autp{\aut^1}%

\begin{document}

\title[A characterization of secant varieties of Severi varieties]
{A characterization of secant varieties of Severi varieties among
cubic hypersurfaces}

\author{Baohua Fu}\thanks{Baohua Fu is supported by
National Natural Science Foundation of China (Nos. 11771425 and 11688101).}
\author{Yewon Jeong}\thanks{Yewon Jeong is supported by
Postdoctoral International Exchange Program (Y890172G21).}
\author{Fyodor L. Zak} \maketitle

\begin{abstract}
It is shown that an irreducible cubic hypersurface with nonzero
Hessian and smooth singular locus is the secant variety of a Severi
variety if and only if its Lie algebra of infinitesimal linear
automorphisms admits a nonzero prolongation.
\end{abstract}

\section{Introduction}

Let $V^m\subset \P^N$ be an $m$-dimensional irreducible nondegenerate
smooth complex projective variety.  The secant variety $SV$ is the
closure of the union of lines in $\P^N$ joining two distinct points of
$V$. It is easy to see that $V$ can be isomorphically projected to a
lower-dimensional projective space if and only if $SV\neq\P^N$, which
can only occur if $m$ is not too big. As conjectured by Hartshorne
and proved by Zak (\cite{Z1}), $SV=\P^N$ provided that $m
>\frac{2N-4}{3}$. We call $V$  a {\em Severi variety} if
$SV\neq\P^N$ and $m =\frac{2N-4}{3}$.

As proved by Zak (\cite{Z1}), there exist exactly four Severi
varieties:
$$
v_2(\P^2)\subset \P^5,\quad\P^2\times\P^2\subset\P^8,
\quad G(1,5)\subset\P^{14},\quad\mathbb{OP}^2\subset\P^{26},
$$
viz. the Veronese surface ($m=2$), the Segre variety ($m=4$), the
Grassmann variety of lines in $\P^5$ ($m=8$), and the Cayley plane
corresponding to the closed orbit of the minimal representation of
the algebraic group $E_6$ ($m=16$). The vector spaces corresponding
to the ambient spaces $\P^N$ of Severi varieties can be identified
with the spaces of Hermitian $3\times3$ matrices with coefficients in
composition algebras, and under this identification the affine cones
corresponding to Severi varieties are the loci of matrices whose rank
does not exceed one. The secant varieties of Severi varieties are
irreducible cubic hypersurfaces defined by vanishing of the
determinant of the corresponding $3\times3$ matrix, and the
projective duals of these cubics are naturally isomorphic to the
corresponding Severi varieties.

Severi varieties form the third row of the so-called Freudenthal
magic square, and their rich projective geometry was thoroughly
studied (see e.g. \cite{Z1}, \cite{LM} and \cite{IM}). They are also
related to homogeneous Fano contact manifolds since the latter can be
recovered from Severi varieties. Thus a better understanding of
Severi varieties can shed some light on the long-standing conjecture
of LeBrun and Salamon predicting that all Fano contact manifolds are
homogeneous. This motivates the problem of characterizing the secant
varieties of Severi varieties among all cubic hypersurfaces.
Following \cite{H}, we solve this problem in terms of prolongations
of infinitesimal automorphisms.

Let $W$ be a complex vector space. A prolongation of a Lie subalgebra
$\g\subset\gl{W}$ is an element $A\in\Hom{(\Sym^2 W,W)}$ such that
$A(w,\cdot)\in\g$ for any $w \in W$. The vector space of all
prolongations of $\g$ is denoted by $\g^1$. Let $T\subset\P W$ be a
smooth projective variety, let $\hat T\subset W$ be the corresponding
affine cone, and let $\aut{\hat{T}}\subset\gl{W}$ be the Lie algebra
of infinitesimal linear automorphisms of $\hat T$. We are interested
in the vector space $\autp{\hat T}$ of all prolongations of $\aut{\hat
T}$.

Let $Y \subset \P W$ be an irreducible cubic hypersurface defined by
a cubic form $F\in\Sym^3{W^*}$, and let $\hat Y$ be its affine cone.
Both $\aut{\hat Y}$ and $\autp{\hat Y}$ can be computed effectively
in terms of $F$ (cf. Section~\ref{s.3}). In the case when $Y=SV$ for
a Severi variety $V$ one has $\autp{\hat Y}\neq0$ (cf.
Corollary~\ref{c.pSeveri}). For various reasons it makes sense to
focus the study of prolongations on the case when the {\it polar
map\/} defined by the partial derivatives of $F$ is surjective or,
equivalently, the Hessian determinant of $F$ is not (identically)
equal to zero. In this case we say that $Y$ is not {\it polar
defective\/} (cf. Definition~\ref{d.pd} in the next section).

J.-M. Hwang posed the following question (Question~1.3 in \cite{H}).

\begin{question}
{\it Let $\,Y$ be an irreducible cubic hypersurface. Is it true that
if $\autp{\hat Y} \neq 0$ and $\,Y$ is not polar defective, then
$\,Y$ is the secant variety of a Severi variety?}
\end{question}

It turns out that in general the answer to this question is negative;
cf. e.g. Example~\ref{e.SectionSecSeveri}\,(ii). In the present paper
we give a positive answer to Hwang's question under the additional
assumption that the (reduced) singular locus $Y'\subset Y$ is smooth.

\begin{main}
{\it Let $Y\subset\P W$ be an irreducible cubic hypersurface.  Assume
that
\begin{itemize}
\item[a)] $Y$ is not polar defective{\rm;}
\item[b)] $Y'$ is smooth{\rm;}
\item[c)] $\autp{\hat Y}\neq 0$.
\end{itemize}
Then $Y$ is the secant variety of a Severi variety.}
\end{main}

It should be mentioned that in \cite{H} J.-M.~Hwang proved a weaker
version of this result in which assumption~(c) is replaced by
\begin{itemize}
\item[$\text c')$] $\Xi^{\,a}_{\,Y}\ne0$ {\it for some} $a\ne\frac14$,
\end{itemize}
where $a\in\C$ is a complex number and $\Xi^{\,a}_{\,Y}\subset
\autp{\hat Y}$ is a certain linear subspace with a rather intricate
definition (cf. Theorem~1.6 in \cite{H}), thus giving a partial
answer to Question~1.5 in \cite{H} which is a weaker form of the
above Question.

As suggested in \cite{H}, the proof of the Main Theorem splits into
two parts: the first one is to show that $Y =SY'_0$ for an
irreducible component $Y'_0\subset Y'$ and the second one is to go
through the classification of smooth nondegenerate projective
varieties with nonzero prolongation given in \cite{FH1} and
\cite{FH2}. In this paper we mainly contribute to the first part of
this strategy by exploring the relationship between dual and polar
defectivity (the latter is equivalent to the classical notion of
vanishing Hessian).

It is easy to see that any irreducible hypersurface with vanishing
Hessian is dual defective, but the converse is not true, as is shown
by the secant varieties of Severi varieties. In
Theorem~\ref{t.PolarDefective} we show that if $Y$ is an irreducible
dual defective cubic with smooth $Y'$ such that  $SY'\subsetneq Y$, then the defining
equation $F$ of $Y$ has vanishing Hessian. At the second step of the
proof of the Main Theorem we anyhow need to assume that $Y'$ is
smooth, and so the smoothness assumption in
Theorem~\ref{t.PolarDefective} is not restrictive. In the meantime
the third named author proved that Theorem~\ref{t.PolarDefective} is
true even without this assumption.

At the end of the paper we give examples showing that none of the
conditions~a)--c) of the Main Theorem can be lifted.

\subsection*{Acknowledgement}
The authors are grateful to Jun-Muk Hwang for helpful discussions and
suggestions and to the anonymous referees for carefully reading the
manuscript and making useful comments that helped us to straighten
the exposition.

\section{Dual defective cubic hypersurfaces}

Let $Y\subset\P^N$ be an irreducible complex projective hypersurface
defined by a homogeneous polynomial $F$ of degree $d>1$. Let
$\phi:\P^N\dasharrow \P^{N\,{}^*}$ be the {\it polar map\/} given by
the {\it polar linear system\/} $\cQ$ whose members are cut out by
the partial derivatives of $F$, let $X^n=Y^*\subset\P^{N\,{}^*}$ be
the dual variety, and let $\gamma=\phi_{|Y}$ be the {\it Gauss map}
(we refer to \cite{K} and \cite{T} for basic facts on dual varieties).
Both $\phi$ and $\gamma$ are defined outside of the singular subset
$Y'=(\Sing{Y})_{\text{red}}$ which is also the base locus of $\cQ$.

\begin{definition} The integer $\Def{Y}=\codim{X}-1=N-n-1$ is called
the {\it dual defect\/} of $Y$. The hypersurface $Y$ is called {\it
dual defective\/} if $\Def{Y}>0$, i.e. $\!X$ fails to be a
hypersurface.
\end{definition}

It is well known that, for a general point $x\in X$, the fiber
$\gamma^{-1}(x)={}^{\perp}T_{X,x}\subset Y$ is a linear subspace of
dimension $\Def{Y}$ (in particular, if $\Def{Y}=0$, then the map
$\gamma:Y\dasharrow X$ is birational).

\begin{definition}\label{d.pd} An irreducible hypersurface $Y$ is called
{\it polar defective\/} if it satisfies one of the following
equivalent conditions:
\begin{itemize}
\item[a)] $\phi(\P^N)=Z\subsetneq\P^{N\,{}^*}$;
\item[b)] $F$ has vanishing Hessian, i.e. $\det{H}\equiv0$, where
$H$ is the Hesse matrix formed by the second order partial
derivatives of $F$.
\end{itemize}
The number $\pDef{Y}=\codim{Z}=N-r$, where $r=\dim{Z}$, is called the
{\it polar defect\/} of $Y$, so that $Y$ is polar defective if and
only if $\pDef{Y}>0$. It is easy to see that $\dim{Z}=\rk{H}-1$ and
$\pDef{Y}=\codim{Z}=\cork{H}$, where $\cork{H}=N+1-\rk{H}$ is the
corank of $H$.
\end{definition}

From the proof of \cite[Proposition~4.9\,(ii)]{Z2} it follows that,
for a general point $z\in Z$, the fiber $\F_z=\phi^{-1}(z)
\subset\P^N$ is a union of finitely many linear subspaces of
dimension $\pDef{Y}$ passing through the linear subspace
${}^{\perp}T_{Z,z}\subset Z^*$ of dimension $\pDef{Y}-1$.
Furthermore, $Z^*\subset Y'$ and $\F_z\cap Y= \F_z\cap Y'=\F_z\cap
Z^*={}^{\perp}T_{Z,z}$.

\smallskip

The simplest example of polar defective hypersurface is given by
cones (in which case both $Z$ and $X$ are degenerate varieties), but
there exist many more interesting examples (cf. e.g.
Example~\ref{e.SectionSecSeveri}\,(iii)).

\begin{proposition}\label{p.PD}
\hfill
\begin{itemize}
\item[(i)] $Z\supsetneq X$, i.e. $n+1\le r\le N$;
\item[(ii)] Any polar defective hypersurface is dual defective.
More precisely, $\Def{Y}\ge\pDef{Y}$ and the inequality is strict if
and only if $r>n+1$.
\end{itemize}
\end{proposition}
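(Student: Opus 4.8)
The plan is to reduce the whole statement to the single geometric fact that $X\subsetneq Z$, and to extract this strict inclusion from the description of the general fibers of $\phi$ recalled just above the Proposition. First I would observe that, by the arithmetic of the defects, (ii) is merely a reformulation of (i): since $\Def Y=N-n-1$ and $\pDef Y=N-r$, one has $\Def Y-\pDef Y=r-n-1$, so that $\Def Y\ge\pDef Y$ is equivalent to $r\ge n+1$, and the inequality is strict exactly when $r>n+1$. Thus it suffices to prove (i). Here the only nonformal point is the strictness of the inclusion: the inclusion $X\subseteq Z$ is immediate because $\gamma=\phi_{|Y}$, whence $X=\overline{\gamma(Y)}\subseteq\overline{\phi(\P^N)}=Z$, both $X$ and $Z$ being irreducible; and $r\le N$ holds trivially since $Z\subseteq\P^{N\,{}^*}$.

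It then remains to show $X\ne Z$, and I would argue by contradiction. Assume $X=Z$. Then $\gamma(Y\setminus Y')$ is dense in $Z$, so (being constructible) it contains a dense open subset of $Z$, which meets the dense open locus of points $z\in Z$ for which the fiber description recalled above is valid. Fix such a point $z$ lying in $\gamma(Y\setminus Y')$. On the one hand there is a smooth point $y\in Y\setminus Y'$ with $\phi(y)=z$, so $y\in\F_z\cap(Y\setminus Y')$. On the other hand the cited description gives $\F_z\cap Y=\F_z\cap Y'$, i.e. $\F_z$ meets $Y$ only along its singular locus, so $\F_z\cap(Y\setminus Y')=\varnothing$. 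This contradiction shows $X\subsetneq Z$. Since $X$ and $Z$ are irreducible, strict inclusion forces $n=\dim X<\dim Z=r$, that is $n+1\le r$; together with $r\le N$ this is (i), and (ii) follows by the reformulation above.

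The heart of the matter is thus the strict inclusion, and it rests entirely on the fiber structure quoted before the statement: a general fiber $\F_z$ of the polar map is not contained in $Y$ and touches $Y$ precisely along $\F_z\cap Y'$. Once this is granted, a general point of $Z$ admits no preimage in the smooth locus of $Y$, so the image $X=\gamma(Y)$ of that smooth locus cannot fill up $Z$. I expect this invocation to be the main (indeed essentially the only) obstacle; everything else is bookkeeping of dimensions. I would also note that the argument is uniform in the polar defect: when $\pDef Y=0$ one has $\F_z\cap Y=\varnothing$ for general $z$ and $Z=\P^{N\,{}^*}$, and the same reasoning applies, recovering the classical fact that the dual variety is a proper subvariety of the dual projective space.
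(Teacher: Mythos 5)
Your proposal is correct, and it reduces (ii) to (i) by exactly the same arithmetic as the paper; but for the key point, the strictness of $X\subset Z$, you take a genuinely different route. The paper also argues by contradiction from $Z=X$, but it works with the \emph{Gauss} fiber: for general $x\in X$ the fiber $\gamma^{-1}(x)={}^{\perp}T_{X,x}$ is a linear $\P^{N-n-1}$ sitting inside the $(N-n)$-dimensional polar fiber $\F_x$, so $\F_x\cdot Y=\P^{N-n-1}$ would force $\F_x$ to be a linear subspace and $Y$ a hyperplane, contradicting $d>1$; the only external input is the standard linearity of general Gauss fibers, plus a degree count. You instead put all the weight on the strongest clause of the fiber description quoted from \cite[Proposition~4.9\,(ii)]{Z2}, namely $\F_z\cap Y=\F_z\cap Y'$ for general $z\in Z$, which says that general polar fibers meet $Y$ only along its singular locus and hence immediately precludes $\gamma(\Sm{Y})$ from being dense in $Z$. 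Your argument is shorter and, as you note, uniform in $\pDef{Y}$, but it is only as solid as that quoted clause, which is essentially equivalent to the assertion being proved (a general point of $Z$ has no smooth preimage on $Y$ if and only if $X\ne Z$); one should therefore check that \cite{Z2} does not derive it from $X\subsetneq Z$ before preferring your route, whereas the paper's degree argument is self-contained. Your handling of constructibility, irreducibility, and the dimension bookkeeping is all fine.
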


\begin{proof} (i) It is clear that $X=\gamma(Y)=\phi(Y)\subset
\phi(\P^N)=Z$. Thus we only need to show that $Z\ne X$. Suppose to
the contrary that $Z=X$, and let $x\in X$ be a general point. Then
$\gamma^{-1}(x)={}^{\perp}T_{X,x}$ is a linear subspace of dimension
$N-n-1$ contained in the $(N-n)$-dimensional fiber $\F_x=
\phi^{-1}(x)\subset\P^N$. Furthermore, since $\gamma=\phi_{|Y}$,
$\F_x\cdot Y=\gamma^{-1}(x)=\P^{N-n-1}$, which is only possible if
$\F_x$ is a linear subspace and $Y$ is a hyperplane, contrary to the
hypothesis that $d>1$. \qed

(ii) is an immediate consequence of (i).
\end{proof}

\begin{remark}\label{r.PD} The converse of Proposition~\ref{p.PD}\,(ii) is
false. For example, let $V_i^{n_i}\subset\P^{N_i}$, $1\le i\le4$,
$n_i=2^i$, $N_i=\frac{3n_i}2+2$ be the $i$-th Severi variety, and let
$Y_i=SV_i\subset\P^{N_i}$ be its secant variety. Then $Y_i$ is a
cubic hypersurface singular along $Y_i'=V_i$, $X_i=\gamma_i(Y_i)
\subset\P^{N_i\,{}^*}$ is also the $i$-th Severi variety, and
$\phi_i:\P^{N_i}\dasharrow\P^{N_{i}\,{}^*}$ is the birational Cremona
transformation contracting $Y_i$ to $X_i$, blowing up $V_i$ to the
cubic $SX_i$, and defining an isomorphism between the complements of
the cubic hypersurfaces in $\P^N$ and $\P^{N\,{}^*}$ (cf. \cite[Chap.
IV]{Z1}). In particular, $\Def{Y_i}=\frac{n_i}2+1$, but
$\pDef{Y_i}=0$ and $Y_i$ is not polar defective.
\end{remark}

\begin{proposition}\label{p.HyperplaneSection} Let $L\subset\P^N$ be
a general hyperplane, and let $Y_L=L\cap Y$ be the corresponding
hyperplane section of $Y$. Then either $\pDef{Y}=\pDef{Y_L}=0$ or
$\pDef{Y}=\pDef{Y_L}+1$. In particular, if $Y_L$ is polar
defective, then so is $Y$.
\end{proposition}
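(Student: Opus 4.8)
The plan is to convert the statement into a question about the generic rank of the Hesse form under restriction, using the characterization $\pDef{Y}=\cork{H}$ from Definition~\ref{d.pd}. Write $\P W=\P^N$, regard $H=H(x)$ as the family of symmetric bilinear forms on $W$ given by the Hesse matrix at $x$, and let $\rho=\rk H$ be its generic rank, so that $\pDef{Y}=N+1-\rho$. For a general hyperplane $L=\P W'$ with $W'\subset W$ of codimension $1$, the section $Y_L$ is cut out by $G=F|_{W'}$, and by the chain rule the Hesse form of $G$ at a point $x'\in W'$ is simply the restriction of $H(x')$ to $W'$, i.e. $H_G(x')=H(x')|_{W'\times W'}$. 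Since $\dim W'=N$, one has $\pDef{Y_L}=\cork H_G=N-\rho'$, where $\rho'$ is the generic rank of $H_G$ on $W'$. Thus the whole proposition reduces to the single claim $\rho'=\min(\rho,N)$: if $\rho=N+1$ this gives $\rho'=N$, i.e. $\pDef{Y}=\pDef{Y_L}=0$, while if $\rho\le N$ it gives $\rho'=\rho$, i.e. $\pDef{Y}=\pDef{Y_L}+1$; the final assertion then follows because $\pDef{Y_L}>0$ excludes the first alternative.

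The core of the argument is a purely linear-algebraic lemma: for a \emph{fixed} symmetric form $B$ of rank $\rho$ on $W$ and a general hyperplane $W'$, one has $\rk(B|_{W'})=\min(\rho,N)$, the upper bound being automatic. This splits into two transparent cases. If $\rho\le N$ then $\Ker B\ne0$, and a general $W'$ does not contain $\Ker B$, so $W'$ surjects onto $W/\Ker B$, on which $B$ induces a nondegenerate form of rank $\rho$; hence $\rk(B|_{W'})=\rho$. If $\rho=N+1$ then $B$ is nondegenerate and $B|_{W'}$ is degenerate exactly when $W'$ is tangent to the smooth quadric $\{B=0\}$, which fails for general $W'$; hence $\rk(B|_{W'})=N$.

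The main obstacle is a quantifier mismatch: the lemma fixes the form and moves the hyperplane, whereas in $\rho'=\max_{x'\in W'}\rk(H(x')|_{W'})$ the hyperplane $W'$ is fixed and general while the point $x'$, and with it the form $B=H(x')$, varies \emph{inside} $W'$, so form and hyperplane are coupled. I would decouple them on the incidence variety $I=\{\,(x,W'):x\in\P W'\,\}$, which is irreducible and fibers over $\P^{N\,{}^*}$ as a $\P^{N-1}$-bundle. On $I$ the function $(x,W')\mapsto\rk(H(x)|_{W'})$ is lower semicontinuous, so the locus $J\subset I$ of pairs with $x$ in the maximal-rank open set $U=\{\,\rk H=\rho\,\}$ and $\rk(H(x)|_{W'})=\min(\rho,N)$ is open; once $J\ne\emptyset$, openness and irreducibility make it dense and its image dominate $\P^{N\,{}^*}$, so a general $W'$ carries a witness $x'$ forcing $\rho'\ge\min(\rho,N)$, hence equality. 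The only genuine work is producing one point of $J$: I would fix a general $x_0\in U$ and exhibit a hyperplane $W'\ni x_0$ realizing the lemma's value, using the Euler relation $H(x_0)\,x_0=(d-1)\nabla F(x_0)\ne0$, which guarantees $x_0\notin\Ker H(x_0)$; this lets me, when $\rho\le N$, choose $W'$ through $x_0$ missing a nonzero kernel vector, and, when $\rho=N+1$, invoke that the tangent hyperplanes through $x_0$ form a proper subfamily of those through $x_0$. A preliminary point to dispatch is that $Y_L$ is again an irreducible cubic, which holds by Bertini once $\dim Y\ge2$, so that the identity $\pDef{Y_L}=\cork H_G$ is legitimately available.
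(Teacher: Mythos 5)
Your proof is correct, but it takes a genuinely different route from the paper's. The paper argues globally with the polar map: it writes $\phi_L=\pi\circ\phi_{|L}$, where $\pi$ is the projection from ${}^{\perp}L$, proves as a separate lemma (via the Euler formula) that ${}^{\perp}L\notin\phi(L)$ for general $L$ so that $\pi$ is finite on $\phi(L)$ and $\dim Z_L=\dim\phi(L)$, and then computes $\dim\phi(L)$ to be $\dim Z$ or $\dim Z-1$ according to whether the fibers of $\phi$ are positive-dimensional or not. You instead argue infinitesimally, translating everything into the generic rank $\rho$ of the Hesse form via $\pDef{Y}=\cork H$ and reducing to the claim $\rho'=\min(\rho,N)$; your key lemma is the linear-algebra fact about restricting a fixed symmetric form to a general hyperplane, and you correctly identify and resolve the quantifier coupling (hyperplane versus point) with the incidence variety $I$, semicontinuity of rank, and a single witness pair produced from the Euler relation $H(x_0)x_0=(d-1)\nabla F(x_0)\ne0$. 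The two arguments are dual to one another ($\dim Z=\rk H-1$, and your witness construction plays exactly the role of the paper's Lemma~\ref{l.L}, both resting on the Euler formula), but yours is more elementary and self-contained, at the cost of being longer; the paper's is shorter and yields as a by-product the geometric description of $Z_L$ as a projection of $Z$, which is reused later (e.g. in Example~\ref{e.SectionSecSeveri}\,(iii)). Your preliminary remark that $Y_L$ must be irreducible for the statement to make sense (Bertini) is a point the paper leaves implicit.
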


\begin{proof} Let $\phi:\P^N\dasharrow\P^{N\,{}^*}$ and $\phi_L: L
\dasharrow L^*$ be the polar maps corresponding to $Y$ and $Y_L$
respectively, let $Z\subset\P^{N\,{}^*}$ and $Z_L \subset L^*$ be
their respective images, and let $\pi:\P^{N\,{}^*}\dasharrow L^*$ be
the projection with center at the point ${}^{\perp}L$ corresponding
to the hyperplane $L$. Then
\begin{equation}\label{p.dp}
\dim{\phi(L)}=
\begin{cases}\dim{Z}=N-\pDef{Y}&\text{if}\ \ \pDef{Y}>0,\\
\dim{Z}-1=N-1&\text{if}\ \ \pDef{Y}=0.
\end{cases}
\end{equation}

We use the following

\begin{lemma}\label{l.L}
Let $L\subset\P^N$ be a general hyperplane, and let ${}^{\perp}L$ be
the corresponding point in $\P^{N\,{}^*}$. Then ${}^{\perp}L\notin
\phi(L)$.
\end{lemma}

\begin{proof}
If $\phi$ fails to be dominant, we can just take any $L$ for which
${}^{\perp}L\notin Z$. Suppose now that $\phi$ is dominant. Let
$\Gamma\subset\P^N\times\P^{N\,{}^*}$ denote the closure of the graph
of $\phi$, and let $\Gamma\xrightarrow p\P^N$ and $\Gamma\xrightarrow
q\P^{N\,{}^*}$ be the natural projections. Let $L$ be a hyperplane
for which ${}^{\perp}L\notin D\cup X$, where $D=
q(p^{-1}(Y'))\subsetneq\P^{N\,{}^*}$. Then ${}^{\perp}L\notin
\phi(L)$ since otherwise ${}^{\perp}L=\phi(v)$ for a point $v\in L
\setminus Y'$ and from the Euler formula it follows that $v\in Y\cap
L$ and ${}^{\perp}L=\phi(v)\in\gamma(Y)=X$, a contradiction.
\end{proof}

We return to the proof of Proposition~\ref{p.HyperplaneSection}.
Since $Z_L=\phi_L(L)=\pi(\phi(L))$ and, by Lemma~\ref{l.L}, $\phi(L)$
is not a cone with vertex ${}^{\perp}L$, from~\eqref{p.dp} it follows
that
$$
\dim{Z_L}=\dim{\phi(L)}=\begin{cases}
\dim{Z}&\text{if}\ \ \pDef{Y}>0\\
\dim{Z}-1&\text{if}\ \ \pDef{Y}=0
\end{cases},
$$
and so
$$
\pDef{Y_L}=N-\dim{Z_L}-1=
\begin{cases} N-\dim{Z}-1=\pDef{Y}-1&\text{if}\ \ \pDef{Y}>0,\\
N-(\dim{Z}-1)-1=\pDef{Y}&\text{if}\ \ \pDef{Y}=0.
\end{cases}
$$
\end{proof}

For an arbitrary (irreducible) hypersurface $Y\subset\P^N$, consider
the {\it conormal variety\/} $\cP\subset X\times Y$, $\cP=
\overline{\{(x,y)\mid x\in\Sm{X},{}^{\perp}y\supset T_{X,x}\}}$,
where $X^n=Y^*$, $\Sm{X}=X\setminus\Sing{X}$ is the open subset of
nonsingular points of $X$ and $T_{X,x}$ is the embedded tangent space
to $X$ at $x$, and let $p:\cP\to X$ and $\pi:\cP\to Y$ denote the
projections. Then $\pi$ is birational, $\gamma=p\circ\pi^{-1}$,
$p_{|\Sm{X}}$ is a $\P^{N-n-1}$ bundle and, for $x\in\Sm{X}$,
$\pi(\cP_x)=\gamma^{-1}(x)$. For a general $x\in X$, consider the
locus $\cP'_x$ of points $y\in\cP_x$ for which the hyperplane section
${}^{\perp}y\cdot X$ fails to have a nondegenerate quadratic
singularity at $x$. An easy computation (cf. e.g.
\cite[Proposition~3.3]{K}) shows that $\pi$ is ramified at $(x,y)$
iff $y\in\cP'_x$, whence $\cP'_x\subset Y'\cap\cP_x$. Let $\cP''_x
\subset\cP'_x\subset\cP_x$ be the linear subspace of hyperplanes that
are tangent to $X$ at $x$ to order larger than $2$ (i.e. inducing an
element of $\mathfrak m_x^3$, where $\mathfrak m_x\subset \cO_x$ is
the maximal ideal of $x$), let $\cQ_n=\P^{(n-1)(n+2)}$ be the
projective space of quadratic forms in $n$ variables (or quadrics in
$\P^{n-1}$), let $\cQ'_n\subset\cQ_n$ be the subvariety of degenerate
quadratic forms (or singular quadrics), and let $\tau:\cP_x
\dasharrow\cQ_n$ denote the projection with center at $\cP''_x$,
which sends a point $y \in \cP_x \setminus \cP''_x$ to the initial
quadratic form of the Taylor expansion near $x$ of the hyperplane
section ${}^{\perp}y\cdot X$. Then $\cP'_x=\tau^{-1}(\cQ'_n)$, and
since $\cQ'_n\subset\cQ_n$ is a hypersurface of degree $n$,
$\cP'_x\subset\cP_x$ is a hypersurface of degree not exceeding $n$
(cf. also the paragraph after the proof of Corollary~5.7 in
\cite[Chapter IV]{Z1}). On the other hand, $\cP'_x\subset Y'\cap
\cP_x$ and $Y'$ is defined by equations of degree $d-1$, hence
$\deg{\cP'_x}\le d-1$. In particular, if $Y$ is a cubic, then
$\deg{\cP'_x}\le2$. The locus of $\cP'_x$ in $\cP$ will be denoted by
$\cP'$.

\smallskip

For the rest of this section we restrict our attention to the case
when $Y$ is an irreducible cubic hypersurface.

\begin{proposition}\label{p.Sec}
Let $Y \subset \P^N$ be a cubic hypersurface. Then $SY'\subseteq Y$.
\end{proposition}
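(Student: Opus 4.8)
The plan is to prove the containment directly by showing that every line joining two distinct points of $Y'$ already lies on $Y$. Since $SY'$ is by definition the closure of the union of such secant lines and $Y$ is closed, this immediately yields $SY'\subseteq Y$. The whole argument is local to a single secant line and exploits nothing more than the fact that $Y'$ is the singular locus of a cubic.

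First I would recall that $Y'=(\Sing{Y})_{\text{red}}$ agrees set-theoretically with the locus where $F$ and all its first partial derivatives vanish, this being exactly the base locus of $\cQ$. Fix distinct points $p,q\in Y'$, pick affine representatives $\hat p,\hat q\in W$, and restrict $F$ to the line $\ell=\overline{pq}$ by forming the binary cubic
\[
g(s,t)=F(s\hat p+t\hat q).
\]
Writing $F(\cdot,\cdot,\cdot)$ for the symmetric trilinear polarization of $F$, I would expand
\[
g(s,t)=s^3F(\hat p)+3s^2t\,F(\hat p,\hat p,\hat q)+3st^2F(\hat p,\hat q,\hat q)+t^3F(\hat q),
\]
and identify the two cross coefficients with directional derivatives through the Euler-type identities $3F(\hat p,\hat p,\hat q)=\langle\nabla F(\hat p),\hat q\rangle$ and $3F(\hat p,\hat q,\hat q)=\langle\nabla F(\hat q),\hat p\rangle$.

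The key step is that all four coefficients vanish: $F(\hat p)=F(\hat q)=0$ because $p,q\in Y$, while $\nabla F(\hat p)=\nabla F(\hat q)=0$ because $p,q$ are singular points of $Y$, which forces the two mixed terms to vanish as well. Hence $g\equiv0$, i.e. $\ell\subseteq Y$. Equivalently, one may phrase the same computation as an intersection-multiplicity count: a line through a singular point of $Y$ meets $Y$ there with multiplicity at least $2$, so $\ell$ meets $Y$ with total multiplicity at least $4$; since $\deg{Y}=3$, a line meeting $Y$ properly could have total multiplicity at most $3$, so $\ell$ must be contained in $Y$.

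I do not expect a genuine obstacle here, as the statement is elementary for cubics. The only points requiring any care are the bookkeeping in the polarization identity, to make sure the singularity hypothesis really annihilates the mixed terms and not merely the pure ones, and the remark that passing to the reduced structure on $\Sing{Y}$ does not enlarge the underlying point set, so that every point of $Y'$ is an honest singular point of $Y$ at which $\nabla F$ vanishes.
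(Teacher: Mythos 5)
Your proof is correct and is essentially the paper's own argument: the paper's one-line proof is exactly the intersection-multiplicity count you give at the end (a line through two distinct singular points meets the cubic with multiplicity at least four, exceeding the degree), and your polarization computation is just the explicit coefficient-level version of that same observation.
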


\begin{proof}
If a line joining two distinct points of $Y'$ were not contained in
$Y$, it would meet $Y$ with multiplicity at least four while
$\deg{Y}=3$.
\end{proof}

\begin{lemma} \label{l.} Suppose that $SY'\subsetneq Y$, and let
$x\in X$ be a general point. Then $\cP'_x$ is a hyperplane in
$\cP_x$.
\end{lemma}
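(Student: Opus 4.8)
The plan is to show that $\cP'_x$ is a hypersurface of degree exactly $1$ in $\cP_x$ by combining two facts established in the excerpt. We already know from the general discussion preceding the lemma that, for $Y$ a cubic and $x\in X$ general, $\cP'_x\subset\cP_x$ is a hypersurface of degree $\deg\cP'_x\le 2$, arising as $\cP'_x=\tau^{-1}(\cQ'_n)$ where $\cQ'_n\subset\cQ_n$ is the degree-$n$ discriminant hypersurface of singular quadrics. So the content to be extracted is that the degree cannot be $2$, i.e. that the quadric $\cP'_x$ must actually be a (doubled or genuine) hyperplane, and in fact a reduced hyperplane.

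**First I would** analyze what a point $y\in\cP'_x$ means geometrically, using the inclusion $\cP'_x\subset Y'\cap\cP_x$ noted in the text. The fiber $\cP_x=\pi^{-1}\bigl(\gamma^{-1}(x)\bigr)$ maps isomorphically under $\pi$ to the linear space $\gamma^{-1}(x)={}^{\perp}T_{X,x}\cong\P^{\Def Y}\subset Y$, so I identify $\cP_x$ with this linear subspace of $Y$. Under this identification $\cP'_x$ corresponds to the locus of points of the linear space $\gamma^{-1}(x)$ lying in $Y'=\Sing Y$. The key leverage comes from the hypothesis $SY'\subsetneq Y$: **the hard part will be** translating this global secant condition into a statement forcing $\cP'_x$ to be reduced and linear, rather than a conic or a double hyperplane.

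**Then I would** exploit Proposition~\ref{p.Sec} together with the cubic structure. If $\cP'_x$ had degree $2$ and were reduced, it would contain lines, and two general points on such a quadric would span a line meeting $Y'$ in those two points; since $SY'\subsetneq Y$ such a line would have to lie in $Y'\cap\gamma^{-1}(x)=\cP'_x$, which a smooth quadric of dimension $\ge 1$ does accommodate, so reducedness alone is not the obstruction — rather one must show the quadric cannot fail to be a hyperplane. I expect the decisive computation to use $\tau$: the map $y\mapsto$ (initial quadratic form at $x$) is \emph{linear} in $y$ away from $\cP''_x$, so $\cP'_x=\tau^{-1}(\cQ'_n)$ has degree equal to the order of vanishing of the discriminant along the image line, and I would argue that the generic corank of the Hessian of the hyperplane section jumps by exactly one as $y$ crosses $\cP'_x$, pinning the degree to $1$. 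The smoothness/secant hypothesis enters to rule out the excess tangency that would produce degree~$2$.

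**The main obstacle** I anticipate is ruling out the double-hyperplane case: a priori $\tau^{-1}(\cQ'_n)$ could be $2H$ for a hyperplane $H$, corresponding to the discriminant vanishing to order two along the image line in $\cQ_n$. To exclude this I would argue that a double structure would force, for $y$ near $H$, the quadric ${}^{\perp}y\cdot X$ to degenerate in corank $\ge 2$, meaning the tangent hyperplane section acquires a worse-than-ordinary singularity; this extra degeneration would in turn enlarge $\gamma^{-1}(x)\cap Y'$ beyond a hyperplane and, via the line-joining argument of Proposition~\ref{p.Sec}, populate $Y$ with secant lines of $Y'$ densely enough to give $SY'=Y$, contradicting $SY'\subsetneq Y$. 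Making this last implication precise — that a non-reduced $\cP'_x$ forces $SY'=Y$ — is where the real work lies, and I would carry it out by a dimension count comparing $\dim\cP'_x+\dim\cP'_x$ against $\dim Y$ through the secant incidence.
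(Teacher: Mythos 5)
There is a genuine gap. You correctly reduce the lemma to excluding the case $\deg{\cP'_x}=2$, but the one idea that actually does this is absent from your write-up: if $\cP'_x$ were a genuine (reduced) quadric hypersurface in the linear space $\cP_x\cong\gamma^{-1}(x)$, then its secant variety would fill that linear space, $S\cP'_x=\cP_x$; since $\cP'_x\subset Y'$, this gives $\gamma^{-1}(x)\subset SY'$ for general $x\in X$, and as these fibers sweep out a dense subset of $Y$ one concludes $SY'=Y$ (using $SY'\subseteq Y$ from Proposition~\ref{p.Sec}), contradicting the hypothesis. That is the paper's entire proof. In its place you offer two substitutes, neither of which works as stated. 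First, the deduction ``two general points of the quadric span a line meeting $Y'$ in those two points; since $SY'\subsetneq Y$ such a line would have to lie in $Y'$'' is a non sequitur: $SY'\subsetneq Y$ says the union of such secant lines is a proper subvariety of $Y$, not that each such line lies in $Y'$; and having drawn this wrong conclusion you then discard the reduced-quadric case as ``not the obstruction,'' when it is exactly the case that the secant hypothesis eliminates. Second, the corank-jump analysis of the discriminant along the image of $\tau$, and the closing ``dimension count comparing $\dim\cP'_x+\dim\cP'_x$ against $\dim Y$,'' are left as intentions rather than arguments; the latter cannot work naively in any case, because secant lines of $\cP'_x$ stay inside the single fiber $\gamma^{-1}(x)$, so the relevant comparison is $S\cP'_x$ against $\cP_x$ fiberwise, followed by varying $x$ --- which is again the argument above.

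Moreover, the ``main obstacle'' you single out, the double-hyperplane case $\cP'_x=2H$, is not an obstacle for the statement being proved: set-theoretically $2H$ is a hyperplane, and the lemma (with $Y'=(\Sing{Y})_{\text{red}}$ reduced) concerns the support of $\cP'_x$. So the proposal both misses the decisive one-line step and mislocates the difficulty; as written it does not constitute a proof.
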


\begin{proof} In fact, if, for a general $x\in X$, $\cP'_x$ were a
quadric, then one would have $S\cP'_x=\cP_x$, hence $SY'=Y$.
\end{proof}

Denote by $\pi'$ the restriction of $\pi$ on $\cP'$ and by $Y''$
the image of $\cP'$ in $Y$. Then $Y''=\pi(\cP')\subset Y'$ is an
irreducible subvariety. For a general point $y\in Y''$, put $X_y=
p((\pi')^{-1}(y))$.

By Proposition~\ref{p.PD}, polar defectivity implies dual
defectivity, and Remark~\ref{r.PD} shows that the converse is not
true even for cubics. However, in the examples in Remark~\ref{r.PD}
one has $SY'_i=Y_i$. This is not accidental: it turns out that a dual
defective cubic hypersurface is polar defective provided that
$SY'\subsetneq Y$. The main goal of this section is to prove this
under the additional assumption that $Y'$ is smooth (cf. however
Remark~\ref{r.sing}).

\begin{theorem}\label{t.PolarDefective} Let $Y\subset\P^N$ be an
irreducible dual defective cubic hypersurface. Suppose that $Y'$ is
smooth and $SY'\subsetneq Y$. Then $Y$ is polar defective.
\end{theorem}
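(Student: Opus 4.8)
The plan is to show that dual defectivity forces polar defectivity by analyzing the fibers of the polar map $\phi$ and exploiting the geometry of the conormal variety together with the hypothesis $SY'\subsetneq Y$. The starting observation is that by Proposition~\ref{p.PD}, polar defectivity is equivalent to $Z=\phi(\P^N)\subsetneq\P^{N\,{}^*}$, and by the contrapositive of Proposition~\ref{p.PD}\,(i) this fails precisely when $Z=\P^{N\,{}^*}$ while $X\subsetneq\P^{N\,{}^*}$ remains a proper (dual defective) subvariety. So the goal is to produce a contradiction from the assumption that $\phi$ is dominant (equivalently $\det H\not\equiv0$) together with dual defectivity of $Y$ and $SY'\subsetneq Y$. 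By Proposition~\ref{p.HyperplaneSection}, polar defectivity descends to general hyperplane sections (since $\pDef{Y_L}=\pDef{Y}-1$ once $\pDef{Y}>0$), so I expect an inductive strategy that cuts by general hyperplanes to reduce $\Def{Y}$ and eventually land in a base case that is visibly polar defective or else a smooth cubic, which cannot be dual defective.

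The key technical input should be Lemma~\ref{l.}, which under $SY'\subsetneq Y$ tells us that for general $x\in X$ the ramification locus $\cP'_x$ of $\pi$ is a \emph{hyperplane} in the fiber $\cP_x\cong\P^{\Def{Y}}$, rather than a quadric. First I would analyze the map $\pi':\cP'\to Y''=\pi(\cP')\subset Y'$ and its fibers $X_y=p((\pi')^{-1}(y))$. The intention is to relate $\cP'_x$ being a hyperplane to the branching behaviour of $\gamma=p\circ\pi^{-1}$: since $\pi$ is birational and ramifies exactly along $\cP'$, the condition that $\cP'_x$ is a hyperplane in each fiber should translate into strong linearity/incidence constraints on how the Gauss fibers $\gamma^{-1}(x)=\P^{\Def Y}$ and the polar fibers $\F_z$ interact. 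The crucial point to extract is that if $\phi$ were dominant (no polar defect) then the general fiber $\F_z=\phi^{-1}(z)$ would be a finite set of points, and I would confront this with the dual-defective structure of the conormal variety to derive a contradiction with $\cP'_x$ being a hyperplane.

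Concretely, I would argue as follows. Assuming $\phi$ dominant, take a general $z=\phi(v)$ with $v$ general, and examine the differential $d\phi_v$, whose rank equals $\rk H-1=N-\pDef Y=N$ under dominance; this says the Hessian is nondegenerate at $v$. I would then intersect $Y$ with a general hyperplane $L$ through $v$ and use Proposition~\ref{p.HyperplaneSection} to keep $Y_L$ non-polar-defective while strictly dropping the dual defect $\Def{Y_L}=\Def Y-1$ (this dimension drop for dual defect under general hyperplane section is standard for dual varieties). Iterating, I reduce to a cubic $Y_{L_1\cap\cdots\cap L_k}$ with $\Def=1$ that is still not polar defective and still satisfies $S(Y_L)'\subsetneq Y_L$ (genericity of the hyperplanes preserves the strict containment since $SY'$ is a proper closed subset). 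The base case $\Def{Y}=1$ is where Lemma~\ref{l.} bites hardest: $\cP_x\cong\P^1$ and $\cP'_x$ a hyperplane means $\cP'_x$ is a single point, so $\pi$ is simply ramified along a divisor, and I would show this configuration forces the Gauss map to degenerate the second fundamental form in a way that makes $\det H$ vanish, contradicting non-polar-defectiveness.

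The hard part will be the base-case contradiction: converting ``$\cP'_x$ is a hyperplane'' into a definite statement about the Hessian. The delicate issue is that dual defectivity a priori controls only the \emph{first-order} geometry of the Gauss fibration, whereas polar defectivity is a genuinely second-order (Hessian) condition, so the bridge must come through the quadratic singularity analysis encoded in the projection $\tau:\cP_x\dashrightarrow\cQ_n$ and the identity $\cP'_x=\tau^{-1}(\cQ'_n)$. I expect the decisive computation to compare the degree of $\cP'_x$ as predicted by $\tau^{-1}(\cQ'_n)$ (degree $\le n$, but here forced to be $1$) against the second fundamental form of $X$ at $x$, and to show that this degeneracy of the discriminant propagates back through $\phi$ to make $\det H\equiv0$. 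Managing the genericity hypotheses so that $SY'\subsetneq Y$, smoothness of $Y'$, and dual defectivity all survive the hyperplane sections simultaneously is the main bookkeeping obstacle, and verifying that the smoothness of $Y'$ is genuinely used (as foreshadowed by the remark that the result holds even without it) will require care in identifying where the fibers $X_y$ are controlled.
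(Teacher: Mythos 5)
Your reduction to the case $\Def{Y}=1$ by general hyperplane sections is sound and matches the paper's Lemma~\ref{l.red} (the paper additionally first disposes of cones, which are trivially polar defective, and cuts once by a general $\P^{n+2}$ rather than iterating single hyperplanes; either way Proposition~\ref{p.HyperplaneSection} and the stability of the hypotheses under general linear sections do the work). The problem is that everything after that point --- which is the entire substance of the theorem --- is missing. You correctly identify that Lemma~\ref{l.} makes $\cP'_x$ a single point of the line $\cP_x$, but your proposed mechanism for the base case (``this configuration forces the Gauss map to degenerate the second fundamental form in a way that makes $\det H$ vanish'') is an expectation, not an argument, and it is not clear it can be made to work as stated: the bridge from the first-order data of the Gauss fibration to the vanishing of the Hessian is exactly the hard content, and no computation is offered. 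Note also that the dual-defective cubics $SV_i$ of Remark~\ref{r.PD} have smooth singular locus, $\Def=1$ after a general linear section, and \emph{nonvanishing} Hessian; they escape only because $SY'=Y$ there, so any successful argument must use the hyperplane (rather than quadric) nature of $\cP'_x$ in an essential and quantitative way, which your sketch never does.

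What the paper actually does in the base case is quite different from a second-fundamental-form analysis. It first handles the degenerate situation $\<Y''\>\subset Y'$ separately (Lemma~\ref{l.linear}: the span becomes a fixed component of the restricted polar system, forcing $\pDef{Y}>0$). Then, for a general $y\in Y''$, it studies the union $P_y$ of the contracted lines $l_x$ through $y$ and, via an elementary lemma on pencils of conics in a plane meeting two lines only at their common point (Lemma~\ref{l.P}), shows that if $Y$ is \emph{not} polar defective then $P_y$ is a linear subspace of $Y$ of dimension $\frac n2+1$, $X_y={}^{\perp}P_y$ is a linear $\P^{n/2}\subset X$, and the restricted polar map factors as a projection from $y$ followed by a \emph{birational} map of $\P^{n/2}$ given by quadrics whose fundamental subscheme lies in a hyperplane (Lemma~\ref{l.l}). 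The contradiction then comes from a completeness lemma for Cremona transformations (Lemma~\ref{l.qm}): such a quadratic system must have a hyperplane as fixed component and be linear, which is incompatible with the structure just established. This is also precisely where the smoothness of $Y'$ is used (tangent lines to $Y'$ at $y$ lie in $SY'\subset Y$, which feeds into Lemma~\ref{l.l}\,(ii)); your proposal flags this as a point requiring care but does not locate it. In short: the skeleton (reduction to defect one plus Lemma~\ref{l.}) is right, but the proof of the base case --- the actual theorem --- is absent.
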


\begin{proof}[Proof of Theorem~\ref{t.PolarDefective}] We split the
proof into a series of lemmas.

\begin{lemma}\label{l.red} It suffices to prove
Theorem~\ref{t.PolarDefective} in the case when $Y$ is not a cone and
$N=n+2$ {\rm(}i.e. $\Def{Y}=1${\rm)}.
\end{lemma}

\begin{proof} Since cones are polar defective, we can
assume that $Y$ is not a cone. Let $L\subset\P^N$, $\dim{L}=n+2$
be a general linear subspace, let $Y_L=L\cap Y$, and put
$X_L=Y_L^*$. From Theorem~1.21 in \cite{T} or Proposition~2.4 in
\cite{Z2} it follows that $X_L$ is obtained from $X$ by
projecting from the (general) linear subspace ${}^{\perp}L\subset
\P^{N\,{}^*}$, hence $\dim{X_L}=\dim{X}=n$ and $\Def{Y}=1$. The
hypotheses that $Y'$ is smooth and $SY'\subsetneq Y$ are clearly
stable with respect to passing to a general linear section.
Therefore Lemma~\ref{l.red} follows from
Proposition~\ref{p.HyperplaneSection}.
\end{proof}

From now on we assume that $N=n+2$ and $Y$ is not a cone. We denote
by $\<A\>$ the linear span of a subset $A\subset\P^N$.

Let $x\in X$ be a general point, and let $l=l_x=\pi(\cP_x)=
\gamma^{-1}(x)$. By Lemma~\ref{l.}, $l\cap Y'=l\cap Y''=y$ is a
single point. Varying $x\in X$, we see that the lines $l_x$ sweep out
a dense subset in $Y$ while by our hypothesis $Y'$ is smooth and the
embedded tangent spaces $T_{Y',y}$ are contained in the subvariety
$SY'\subsetneq Y$; hence we may assume that $l\not\subset T_{Y',y}$.
The line $l$ is blown down by the map $\phi$ defined by the polar
linear system $\cQ$, hence a general quadric from $\cQ$ meets $l$
only at $y$.

\begin{lemma}\label{l.linear} Suppose that $\<Y''\>\subset Y'$. Then
$Y$ is polar defective.
\end{lemma}

\begin{proof} Put $P_x=\<l_x,Y''\>$. As $x$ varies in $X$, the lines
$l_x$ sweep out $Y$, whence, for a general $x\in X$,
$P_x\supsetneq\<Y''\>$. Thus $\<Y''\>$ is a hyperplane in $P_x$, and
so by our hypothesis $\<Y''\>$ is a fixed component of the
restriction $\cQ_{P_x}$ of the polar system of quadrics on $P_x$ and
$\phi_{|P_x}$ is a linear projection. Since $\phi(l_x)=x$, we
conclude that the restriction of the polar map $\phi$ on $P_x$
coincides with the projection from the point $y$ and
$\dim{\phi(P_x)}=\dim{\<Y''\>}=\dim{P_x}-1$.

Denote by $\mathbf P$ the closure of a union of all such $P_x$. Since
$P_x\supset l_x$ and the locus of $l_x$ is dense in $Y$, we have
$\mathbf P\supset Y$. On the other hand, $P_x\not\subset Y$ for a
general $x\in X$ since otherwise $Y$ would be a cone with vertex
$\<Y''\>$. Hence $\mathbf P=\P^N$, and so $\dim{\phi(\P^N)}=N-1$ and
$\phi(\P^N)$ is a hypersurface in $\P^{N\,{}^*}$ containing $X$.
\end{proof}

From now on we assume that $\<Y''\>\not\subset Y'$ and so
$T_{Y',y}\not\subset Y'$ for an arbitrary point $y\in Y''$.

\medskip

We skip the proof of the following elementary lemma on linear systems
of conics in a plane.

\begin{lemma}\label{l.P}
Let $P=\P^2$ be a plane, let $l,l'\subset P$ be distinct lines, and
let $y=l\cap l'$. Let $\cQ_P$ be a linear system of conics in $P$
whose general member meets $l$ and $l'$ only at $y$, and let $\phi_P$
be the rational map defined by $\cQ_P$. Then all members of $\cQ_P$
are unions of pairs of lines through $y$. Furthermore, if $\cQ_P$ is
nonconstant, then $\dim{\phi_P(P)}=1$ and $\phi_P=\psi_P \circ\pi_P$,
where the projection $\pi_P:P\dasharrow\P^1$ is the projection with
center $y$ and $\psi_P$ is a finite map of degree at most two defined
by a subsystem of the linear system $|\cO_{\P^1}(2)|$.
\end{lemma}

Let $y\in Y''$ be a general point, and let $X_y$ be the closure of
the set of general points $x\in X$ for which the line $l=l_x=
\pi(\cP_x)=\gamma^{-1}(x)$ meets $Y'$ in the (unique) point $y$. By
definition, the hyperplane ${}^\perp y$ is tangent to $X$ along $X_y$
(i.e. at the points of $X_y\cap\Sm{X}$) and $X_y$ is a fiber of the
dominant rational map $X\dasharrow Y''$ sending a general point $x\in
X$ to the (unique) point $y=l_x\cap Y''$, whence $\dim{X_y}=n-r''$,
where $r''=\dim{Y''}$. Let $P_y$ denote the closure of a union of
lines $l_x$, where $x\in X_y$ is a general point. Then $\gamma(P_y)=
X_y$ and $\dim{P_y}=n-r''+1=N-r''-1$. Let $r'$ be the dimension of
the irreducible component of $Y'$ containing $Y''$. Then
$r'=\dim{L_y}$ where $L_y=T_{Y',y}$, and it is clear that $r'\ge
r''$. Denote by $\cQ_y$ the restriction of the polar linear system
$\cQ$ to $P_y$, let $\phi_y=\phi_{|P_y}:P_y\dasharrow X_y$ be the map
defined by $\cQ_y$, and let $\text{\rm B}_y\subset P_y$ be the
fundamental subscheme (i.e. the scheme theoretic intersection of all
members) of $\cQ_y$.

\begin{lemma}\label{l.l} Suppose that $Y$ is not polar defective. Then
\begin{itemize}
\item[(i)] $P_y=\P^{N-r''-1}$ is a linear subspace contained in $Y${\rm;}
\item[(ii)] $L_y\subsetneq P_y$ and $r'+r''\le n${\rm;}
\item[(iii)] $X_y\subset X$ is a linear subspace, $\dim{X_y}=r'=r''
=\dim{L_y}=\frac n2$ and $\dim{P_y}=\frac n2+1${\rm;}
\item[(iv)] The fundamental subscheme $\text{\rm B}_y\subset P_y$ is a
cone with vertex $y$ properly contained in the hyperplane
$L_y${\rm;}
\item[(v)] $\phi_y=\psi_y\circ\pi_y$, where $\pi_y:P_y\dasharrow
\P^\frac n2$ is the projection with center $y$ and $\psi_y:
\P^\frac n2\dasharrow X_y=\P^\frac n2$ is a birational map
defined by a linear system of quadrics whose fundamental
subscheme is properly contained in the hyperplane
$\Lambda_y=\pi_y(L_y)$.
\end{itemize}
\end{lemma}

\begin{proof} (i) Let $l=l_x$, $l'=l_{x'}$, where $(x,x')$ is a pair
of general points in $X_y$, and put $P=\<l,l'\>$. Then $P,l,l'$ and
$\cQ_P=\cQ_{|P}$ satisfy the hypotheses of Lemma~\ref{l.P}. Let
$\mathbf P$ denote the closure of the locus of planes $P$. Since
$P\supset l=l_x$ and the locus of $l_x$ is dense in $Y$, $\mathbf P$
contains $Y$. If a general plane $P$ from our family is not contained
in $Y$, then $\mathbf P=\P^N$, and from Lemma~\ref{l.P} it follows
that $Y$ is polar defective, contrary to our hypothesis. Thus
$P\subset Y$ and therefore $P\subset P_y$, from which it follows that
the secant variety $SP_y$ is contained in $P_y$, whence $P_y\subset
Y$ is a linear subspace.

(ii) Since $x\in X_y$ is general, $l=l_x\not\subset L_y$. Let $l'\ni
y$ be a general tangent line to $Y'$ at the point $y$. Since $Y'$ is
smooth, from Proposition~\ref{p.Sec} it follows that $l'\subset SY'
\subset Y$. We may assume that $l'\not\subset Y'$ since otherwise
$L_y\subset Y'$, the irreducible component of $Y'$ containing $Y''$
is linear, and $Y$ is polar defective by Lemma~\ref{l.linear}.
Clearly, $l'$ is blown down by $\phi_y$ and a general quadric from
$\cQ_y$ meets $l'$ only at $y$. Let $P=\<l,l'\>$ be the plane spanned
by the lines $l$ and $l'$. Then $P,l,l'$ and $\cQ_P=\cQ_{|P}$ satisfy
the hypotheses of Lemma~\ref{l.P} and the same argument as in~(i)
shows that $Y$ is polar defective unless $P\subset Y$. Thus $P\subset
Y$, and from Lemma~\ref{l.P} it follows that $P\subset P_y$. From
this it follows that $L_y \subsetneq P_y$, and since $r'=\dim{L_y}\le
\dim{P_y}-1=n-r''$, we conclude that $r'+r''\le n$.

(iii) Take any point $v\in P_y\cap\Sm{Y}$. Then $P_y\subset T_{Y,v}$,
and so $\gamma(v)={}^\perp T_{Y,v}\in{}^\perp P_y$, where by (i)
${}^\perp P_y\subset\P^{N\,{}^*}$ is a linear subspace of dimension
$r''$. Varying $v$ in $P_y$, we see that $X_y=\gamma(P_y)\subset
{}^\perp P_y$ and $n-r''=\dim{X_y}\le r''$, i.e. $r''\ge\frac n2$.
Since by (ii) $r'+r''\le n$, we conclude that $X_y={}^\perp P_y
\subset X$ is a linear subspace and $\dim{X_y}=r''=r'=\frac n2$,
$\dim{P_y}=\frac n2 +1$.

(iv) From Lemma~\ref{l.P} it follows that the members of the linear
system $\cQ_y$ are cones with vertex $y$, hence $\Supp{\text{\rm
B}_y}$ is a cone with vertex $y$ in $L_y$. The linear system $\cQ_y$
does not have fixed components since, being a hyperplane in $P_y\cap
Y'$ passing through $y$, such a component should necessarily coincide
with $L_y$, which, by Lemma~\ref{l.linear}, contradicts the
hypothesis of the lemma.

To show that $\text{\rm B}_y$ is contained in $L_y$ it suffices to
check that, for any point $v\in\text{\rm B}_y$, $v\ne y$ the
(embedded projective) tangent space $T_{\text{\rm B}_y,v}$ is
contained in $L_y$. But if it were not so, then there would exist a
line $m\subset T_{\text{\rm B}_y,v}$ such that $m\cap L_y=v$,
$y\notin m$, and $m$ is contracted by $\phi_y$. Thus to prove~(iv) it
remains to show that if $m\subset P_y$ is a line such that $y\notin
m$ and $m\not\subset L_y$, then $\phi_y(m)$ is a curve. Let $P$ be
the plane spanned by $m$ and $y$. The linear system $\cQ_P=\cQ_{|P}$
is nonconstant since otherwise, by the above, the intersection of $P$
with $\text{\rm B}_y$ would consist of two (possibly coinciding)
lines passing through $y$ and the plane $P \supset m$ would be
contained in $L_y$, contrary to our assumption. Our claim now follows
from Lemma~\ref{l.P} according to which $\phi_y(m)= \phi_y(P)$ is a
curve.

(v) It is well known that, for a general point $x\in X$, the fiber
$\gamma^{-1}(x)$ is a line (cf. \cite[ch.~I, Theorem~2.3\,c)]{Z1}),
whence $\psi_y$ is birational. The other claims follow from~(i),
(iii), (iv), and Lemma~\ref{l.P}.

\end{proof}

To complete the proof of Theorem~\ref{t.PolarDefective} it remains to
show that the situation described in Lemma~\ref{l.l} does not
actually occur. We will do that by proving a general result on linear
systems of hypersurfaces in projective space defining a birational
map onto a linear space of the same dimension (or, equivalently, a
birational automorphism of the ambient space; such automorphisms are
called {\it Cremona transformations}\/). To formulate this result it
is convenient to use the following definition.

Let $V$ be a projective variety, and let $\text{\rm B}\subset V$ be a
subscheme. A linear system of divisors $\cD$ on $V$ is called {\it
complete modulo $\text{\rm B}$\/} if $\cD=|D-B|$, where $|D-B|$ is
the linear system of {\it all\/} effective divisors on $V$ that
contain $B$ and are linearly equivalent to divisors from $\cD$.

Various versions of the following lemma are known to experts, but we
include a proof for the reader's convenience.

\begin{lemma}\label{l.qm}\hfill
\begin{itemize}
\item[(i)] A linear system of hypersurfaces of degree $d$ in $\P^N$
defining a birational map $\P^N\dasharrow\P^N$ is complete modulo
its fundamental subscheme $\text{\rm B}${\rm;}
\item[(ii)] Suppose that the fundamental subscheme $\text{\rm B}$ of a
linear system of hypersurfaces of degree $d$ in $\P^N$ defining a
birational map $\phi:\P^N\dasharrow\P^N$ is contained in a
hypersurface of degree less than $d$. Then the linear system has
a fixed component of degree $d-1$, the residual system is the
complete linear system of hyperplanes in $\P^N$, and $\phi$ is a
linear automorphism.
\end{itemize}
\end{lemma}

\begin{proof} (i) We reformulate the lemma in terms of the Veronese
variety $V=v_d(\P^N)\subset\P^{\nu}$, $\nu=\binom{N+d}d-1$. Let
$L\subset\P^{\nu}$ be the linear subspace of codimension $N+1$ in
$\P^{\nu}$ corresponding to our linear system, then $v_d(\text{\rm
B)}=L\cdot V$. If our linear system were not complete modulo
$\text{\rm B}$, there would exist a linear subspace $L'\subsetneq L$
such that $v_d(\text{\rm B})=L'\cdot V$. Let $\Lambda\subset L$ be a
linear subspace of dimension $\codim_L{L'}-1$ disjoint from $L'$,
hence also from $V$, and let $\pi_L$ and $\pi_{L'}$ be the
projections with centers $L$ and $L'$ respectively. Then
$\pi_L=\pi_{\Lambda'}\circ\pi_{L'}$, where $\Lambda'=
\pi_{L'}(\Lambda)$. The variety $V'=\pi_{L'}(V)$ is nondegenerate in
the linear space $\pi_{L'}(\P^{\nu})$, and by our hypothesis the map
$\pi_{\Lambda'}:V'\dasharrow\P^N$ is birational. On the other hand,
$\pi_{L'}^{-1}(V')=S(L',V)$ and $L\cap S(L',V)=S(L',L\cap V)=S(L',L'
\cap V)=L'$, hence $\Lambda\cap\pi_{L'}^{-1}(V')=\emptyset$ and, by
projection formula, $\Lambda'$ does not meet $V'$. Thus
$\pi_{\Lambda'}:V'\to\P^N$ is a finite morphism of degree
$\deg{V'}>1$, a contradiction.

(ii) If $\text{\rm B}\subset H_{d'}$, where $H_{d'}\subset\P^N$ is a
hypersurface of degree $d'<d$, then all the hypersurfaces of the form
$H_{d'}\cup W_{d-d'}$, where $W_{d-d'}$ is an arbitrary hypersurface
of degree $d-d'$, also contain $\text{\rm B}$. The dimension of the
linear system of such hypersurfaces (equal to $\binom{N+d-d'}{d-d'}
-1$) is larger than $N$ for $d'<d-1$ and equal to $N$ for $d'=d-1$.
The first case is ruled out by~(i), and in the second case the system
can only be complete if $\text{\rm B}=H_{d-1}$.
\end{proof}

Combining Lemma~\ref{l.l}\,(v) with Lemma~\ref{l.qm}\,(ii), we see
that $Y$ is polar defective. This completes the proof of
Theorem~\ref{t.PolarDefective}.
\end{proof}

\begin{remark}\label{r.sing} The assumption that $Y'$ is smooth in
the statement of Theorem~\ref{t.PolarDefective} is not necessary. The
third named author proved that the theorem holds even without this
assumption. However, our Main Theorem~\ref{t.Severi} is false without
the smoothness hypothesis (condition~b) in its statement) as is shown
by Example~\ref{e.SectionSecSeveri}\,(ii). Thus this assumption is
not restrictive for the purposes of the present paper and we do not
give a proof of Theorem~\ref{t.PolarDefective} in full generality.
\end{remark}

In Section~\ref{s.3} we will use Theorem~\ref{t.PolarDefective} via
its

\begin{corollary} \label{c.main}
Let $Y \subset \P^N$ be an irreducible cubic hypersurface.  Assume
that
\begin{itemize}
\item[a)] $Y$ is dual defective, but not polar defective{\rm;}
\item[b)] the singular locus $Y'$ of $Y$ is smooth.
\end{itemize}
Then there exists an irreducible component $Y'_0\subset Y'$ such that
$Y=SY'_0$.
\end{corollary}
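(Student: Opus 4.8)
The plan is to deduce the Corollary from Theorem~\ref{t.PolarDefective} by contraposition, and then to upgrade the resulting equality $SY'=Y$ to one of the form $SY'_0=Y$ for a single component. Since $Y$ is dual defective and $Y'$ is smooth, assumptions (a) and (b) together with Theorem~\ref{t.PolarDefective} show that $SY'\subsetneq Y$ is impossible: were it to hold, $Y$ would be polar defective, contradicting (a). Combined with Proposition~\ref{p.Sec}, which gives $SY'\subseteq Y$ for any cubic, this yields $SY'=Y$.

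It remains to locate a single component. Because $Y'$ is smooth, its irreducible components $Y'_1,\dots,Y'_k$ are pairwise disjoint and coincide with its connected components. Writing $SY'=\bigcup_{i\le j}J(Y'_i,Y'_j)$ as the finite union of the pairwise joins and using that $Y$ is irreducible, I would conclude that one join already equals $Y$. If this is a diagonal join $J(Y'_i,Y'_i)=SY'_i$, the Corollary follows with $Y'_0=Y'_i$. The substance of the argument is therefore to exclude the possibility that the filling join is a genuine cross-join $J(Y'_i,Y'_j)=Y$ with $i\ne j$.

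To rule this out I would exploit the cubic structure along secant lines. Parametrising a secant line $\ell=\<p,q\>$ of $Y'$ linearly, each partial derivative $\partial_k F$ restricts to a binary quadratic form vanishing at the two points $p,q\in\Sing Y$, hence is a fixed multiple $c_k\cdot su$ of the product of the two vanishing linear forms; thus the polar map contracts $\ell$ to the single point $[c_0:\cdots:c_N]$, while at a general point of $\ell$ the gradient $(c_k\, su)_k$ is nonzero, so general secant points are smooth points of $Y$. Now take general $p,p'\in Y'_i$ and $q\in Y'_j$. By Proposition~\ref{p.Sec} the three lines $\<p,p'\>,\<p,q\>,\<p',q\>$ lie in $Y$; if the plane $\Pi=\<p,p',q\>$ is not contained in $Y$, then the plane cubic $\Pi\cap Y$ contains these three lines and hence equals their union, a triangle whose three vertices $p,p',q$ are precisely its singular points and therefore lie in $Y'=\Sing Y$. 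The degenerate case $\Pi\subseteq Y$ for general choices would exhibit $Y$ as swept out by a large family of planes through $Y'_i$, which I would rule out by a dimension count against $Y$ not being a cone.

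The hard part is the final connectedness step. The three vertices of such a triangle are interchangeable in the local quadro-quadric picture of the contracted triangle, and I would formalise this by considering the family of all triangle-planes $\Pi$ together with the triple cover given by their marked vertices: this cover sits inside $Y'$, and I would argue that it is irreducible, equivalently that the monodromy on the three vertices is transitive, so that its image lies in a single connected component of $Y'$. Since that image meets both $Y'_i$ (through the points $p$) and $Y'_j$ (through the points $q$), irreducibility forces $Y'_i=Y'_j$, contradicting $i\ne j$. Hence no cross-join can equal $Y$, the filling join is diagonal, and $Y=SY'_0$ for a single component $Y'_0$. Establishing the transitivity of this monodromy, i.e. ruling out a distinguished vertex, is the main obstacle, and is where the smoothness of $Y'$ and the non-vanishing of the Hessian must enter decisively.
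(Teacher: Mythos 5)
Your first step is correct and coincides with the paper's: Theorem~\ref{t.PolarDefective} together with Proposition~\ref{p.Sec} gives $SY'=Y$, and by irreducibility some join $S(Y'_i,Y'_j)$ must equal $Y$, so the whole content of the corollary is to exclude a cross-join $Y=S(Y'_1,Y'_2)$ with $Y'_1\ne Y'_2$. It is precisely at this point that your argument has a genuine gap. The monodromy of the marked-vertex triple cover over the family of triangle-planes automatically preserves the partition of each fiber according to which connected component of $Y'$ the vertex lies in (the components are disjoint since $Y'$ is smooth, and a vertex cannot jump components along a continuous path in the family). So in the very situation you are trying to rule out --- triangles with two vertices on $Y'_1$ and one on $Y'_2$ --- the cover visibly splits into a degree-two piece and a degree-one piece, the monodromy is \emph{not} transitive, and no contradiction arises. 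Transitivity is therefore not a lemma you can hope to establish from ``interchangeability in the local quadro-quadric picture''; it is logically equivalent to the statement you are trying to prove, and the argument is circular. (Your secondary claim that the three vertices of the triangle $\Pi\cap Y$ lie in $\Sing Y$ also does not follow from their being singular points of the plane section, though this is harmless here since $p,p',q$ were chosen in $Y'$ to begin with.)

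The paper closes this gap by an entirely different, and essentially arithmetic, argument which you would need to supply. First, since $\cP'_x\subset\cP_x$ is a hypersurface of degree at most $2$ and in the cross-join case it is the union of the two hyperplanes $\cP_x\cap Y'_1$ and $\cP_x\cap Y'_2$, these would meet as soon as $\dim\cP_x=N-n-1>1$; smoothness of $Y'$ forces $Y'_1\cap Y'_2=\emptyset$, hence $N=n+2$ and the general Gauss fiber $\cP_x$ is a line, so a general point of $Y$ lies on a \emph{unique} secant $\<y_1,y_2\>$ with $y_i\in Y'_i$. Then one embeds $Y'_1$ and $Y'_2$ in disjoint linear subspaces of $\P^{2N+1}$, forms their external join $\tilde Y$ of degree $\deg Y'_1\cdot\deg Y'_2$, and observes that the projection back to $\P^N$ is finite (disjointness) and birational (uniqueness of the secant), giving $3=\deg Y=\deg Y'_1\cdot\deg Y'_2$; one factor must be linear, making $Y$ a cone and hence polar defective, contradicting hypothesis~a). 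None of these steps appears in your proposal, and the triangle/monodromy route does not substitute for them.
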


\begin{proof}
From Theorem~\ref{t.PolarDefective} it follows that $Y=SY'$. Suppose
that the secant variety of an arbitrary component of $Y'$ is a proper
subvariety of $Y$. Then there exist distinct irreducible components
$Y_1,Y_2\subset Y'$ such that $Y=S(Y_1,Y_2)$, where $S(Y_1,Y_2)$ is
the {\it join\/} of $Y_1$ and $Y_2$, i.e. \!the closure of the
subvariety in $\P^N$ swept out by the lines $\<y_1,y_2\>$, where
$y_1$ (resp. $y_2$) runs through the subset of general points in
$Y_1$ (resp. \!$Y_2$). Let $X^n=Y^*$, and let $x\in X$ be a general
point. The lines of the form $\<y_1,y_2\>$ are contracted by the
Gauss map $\gamma:Y\dasharrow X$, and so $N\ge n+2$. It was already
shown that $\cP'_x\subset\cP_x$ is a hypersurface whose degree does
not exceed $2$, and since under our assumptions $\cP'_x$ is not
linear, $\cP'_x=\cP_x\cap(Y_1\cup Y_2)$ is a union of two hyperplanes
in the $(N-n-1)$-dimensional linear space $\cP_x$. Therefore $Y_1\cap
Y_2 \cap\cP_x\ne\emptyset$ provided that $N-n-1>1$. Since $Y'$ is
smooth, $Y_1\cap Y_2=\emptyset$, and so $N=n+2$ and
$\cP_x=\gamma^{-1}(x)$ is a line. Hence a general point $y\in Y$  is
contained in a unique secant line $\<y_1,y_2\>$, $y_1\in Y_1$,
$y_2\in Y_2$.

Let $z_0:\cdots:z_{2N+1}$ be homogeneous coordinates in
$\P^{2N+1}$. Consider two copies $\Lambda_1$ and $\Lambda_2$ of
$\P^N$ embedded as disjoint linear subspaces in $\P^{2N+1}$ as
follows:
$$
\Lambda_1=\{z_{N+1}=\cdots=z_{2N+1}=0\},\qquad
\Lambda_2=\{z_0=\cdots=z_N=0\}.
$$
Let $\tilde Y_1\subset\Lambda_1$ and $\tilde Y_2\subset\Lambda_2$
be the corresponding embeddings of $Y_1$ and $Y_2$, and let
$\tilde Y= S(\tilde Y_1,\tilde Y_2)$ be their join. It is easy to
see that $\deg{\tilde Y}=\deg{\tilde Y_1}\cdot\deg{\tilde Y_2}$.

Put $\Lambda=\{z_{N+1}=z_0,z_{N+2}=z_1,\cdots,z_{2N+1}=z_N\}$, and
let $\pi_{\Lambda}:\P^{2N+1}\dasharrow\P^N$ denote the projection
with center $\Lambda$. Since $Y_1\cap Y_2=\emptyset$, $\Lambda\cap
\tilde Y=\emptyset$ and $\pi_{\Lambda}:\tilde Y\to Y$ is a finite
morphism. The degree of $\pi_{\Lambda}$ is equal to the number of
pairs $(y_1,y_2)\in Y_1\times Y_2$ such that $\<y_1,y_2\>\ni y$, and
so from the above it follows that $\deg{\pi_{\Lambda}}=1$. Thus
$\pi_{\Lambda}$ is birational, $3=\deg{Y}=\deg{\tilde
Y}=\deg{Y_1}\cdot\deg{Y_2}$, and either $Y_1$ or $Y_2$ is linear,
contrary to the hypothesis that $Y$ is not a cone.
\end{proof}

\section{Cubic hypersurfaces admitting nonzero prolongations} \label{s.3}

To study group actions on projective algebraic varieties it is often
more convenient to consider the corresponding affine cones. We start
with interpreting in the affine language some of the notions
introduced in the preceding section.

Let $\P^N=\P W$, where $W$ is an $(N+1)$-dimensional complex vector
space, and let $F\in\Sym^3{W^*}$ be an irreducible cubic form. For
$w\in W$, we denote by $F_{ww}\in W^*$ (resp. $F_w\in\Sym^2{W^*}$)
the linear function (resp. quadratic form) on $W$ defined by
$u\mapsto F(w,w,u)$ (resp. $(u, v)\mapsto F(w,u,v)$). The affine
polar map $\hat{\phi}:W\dasharrow W^*$ is given by $w\mapsto F_{ww}$
and the Hessian by $F_w$. Let $\Hat Y=\{w\in W\mid
F(w,w,w)=0\}\subset W$ be the affine cubic hypersurface defined by
$F$. The indeterminancy locus of $\hat\phi$ coincides with the
singular locus $\Sing{\Hat Y}=\{w\in W\mid F_{ww}=0\}$. For any
$y\in\Sm{\Hat Y}$ we have
\begin{equation}\label{e.t}
T_{\Hat Y,y}=\Ker{F_{yy}}=\{v\in W\mid F(y,y,v)=0\}.
\end{equation}
The affine Gauss map $\Hat\gamma:\Hat Y\dasharrow W^*$, $\Hat\gamma=
\Hat\phi_{|\Hat Y}$ maps a smooth point $y\in\Hat Y$ to the
hyperplane $T_{\Hat Y,y}$. The closure of $\Hat\gamma(\Hat Y)$  in
$W^*$ is called the affine dual variety of $\Hat Y$ and is denoted by
$\Hat X$.

Denote by $\aut{\Hat Y}\subset\gl{W}$ the Lie algebra of
infinitesimal linear automorphisms of $\Hat Y$:
$$
\aut{\Hat Y}=\{g\in\gl{W}\mid g(y)\in T_{\Hat Y,y}\quad\forall\,y\in
\Sm{\Hat Y}\}.
$$
By \eqref{e.t}, this can be rewritten as follows:
$$
\aut{\Hat Y}=\{g\in\gl{W}\mid F(g(y),y,y)=0\quad\forall\,y\in
\Sm{\Hat Y}\}.
$$

A prolongation of $\aut{\Hat Y}$ is an element $A\in
\Hom{(\Sym^2W,W)}$ such that $A(w,\cdot)\in\aut{\Hat Y}$ for all
$w\in W$. The vector space of all prolongations of $\aut{\Hat Y}$
will be denoted by $\autp{\Hat Y}$. In other words, an element $A\in
\Hom{(\Sym^2W,W)}$  is contained in $\autp{\Hat Y}$ if and only if
\begin{equation}\label{e.p}
 F(A(y,w),y,y)=0\quad\forall\,y\in\Sm{\Hat Y},\ \forall\,w\in W.
\end{equation}

\begin{proposition}\label{p.DP}
Let $Y\subset\P W$ be an irreducible cubic hypersurface. If
$\autp{\Hat Y}\neq0$, then $Y$ is dual defective.
\end{proposition}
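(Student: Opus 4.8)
The plan is to work entirely in the affine language of Section~\ref{s.3} and to exhibit, for a general point $y\in\Hat Y$, an \emph{explicit} nonzero vector in $\Ker F_y$. Since $\Ker F_y\subseteq T_{\Hat Y,y}$ (if $F_y(\dot y)=0$ then in particular $F(y,\dot y,y)=0$, so $\dot y\in\Ker F_{yy}=T_{\Hat Y,y}$ by \eqref{e.t}) and the differential of the affine Gauss map is $d\hat\gamma_y=2F_y|_{T_{\Hat Y,y}}$, such a vector spans a tangent direction of the fibre $\hat\gamma^{-1}(\hat\gamma(y))$; a positive-dimensional general fibre means exactly that $\Def Y>0$, i.e. $Y$ is dual defective.

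First I would promote the infinitesimal condition \eqref{e.p} to a polynomial identity. For fixed $w$ the cubic $y\mapsto F(A(y,w),y,y)$ vanishes on $\Sm{\Hat Y}$, hence on $\Hat Y$; as $F$ is irreducible it is divisible by $F$, and a degree count produces a linear form $c\in W^*$ with
\[
F(A(y,w),y,y)=c(w)\,F(y,y,y)\qquad(\forall\,y,w\in W).
\]
Polarising in $y$ (differentiating along $y+tv$) gives
\[
F(A(v,w),y,y)+2F(A(y,w),y,v)=3c(w)\,F(y,y,v)\qquad(\forall\,y,v,w).
\]
Setting $w=y$ and restricting to $y\in\Hat Y$, the first summand is $F(A(y,v),y,y)=0$ — precisely the defining property of $\aut{\Hat Y}$ applied to $A(\cdot,v)$ at the point $y$ — so $2F(A(y,y),y,v)=3c(y)F(y,y,v)$ for all $v$. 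Rewriting the right-hand side as $F\bigl(\tfrac32 c(y)\,y,\,y,\,v\bigr)$ yields $F(u_y,y,v)=0$ for every $v$, where
\[
u_y:=A(y,y)-\tfrac32\,c(y)\,y\in\Ker F_y .
\]

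It then remains to check that $u_y\ne0$ for general $y\in\Hat Y$, and this is where $A\ne0$ must be used. If $u_y$ vanished identically on $\Hat Y$, then each coordinate of $A(y,y)-\tfrac32 c(y)y$ would be a quadric vanishing on the irreducible cubic $\Hat Y$, hence identically zero, so $A(y,y)=\tfrac32 c(y)\,y$ on all of $W$. Feeding this into the cubic identity above (with $w=y$) gives $\tfrac32 c(y)F(y,y,y)=c(y)F(y,y,y)$, whence $c\equiv0$ and then $A(y,y)\equiv0$, i.e. $A=0$ by polarisation — a contradiction. Therefore $0\ne u_y\in\Ker F_y$ for general $y$, the general fibre of $\hat\gamma$ is positive-dimensional, and $Y$ is dual defective.

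The main obstacle is locating the \emph{correct} element of $\Ker F_y$: the naive candidate $A(y,y)$, which \eqref{e.p} makes tangent to $\Hat Y$, is \emph{not} in the kernel (indeed $F_y(A(y,y))=\tfrac32 c(y)F_{yy}$), and one is forced to subtract the radial correction $\tfrac32 c(y)\,y$. This also clarifies why the conclusion is only dual — not polar — defectivity: the kernel vector is produced only along $\Hat Y$, consistent with the secant varieties of Severi varieties, which carry a nonzero prolongation yet have nonvanishing Hessian off the cubic.
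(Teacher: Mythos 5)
Your proof is correct and follows essentially the same route as the paper: the identity $F(A(u,w),w,w)=\lambda(u)F(w,w,w)$, its polarization in the cubic argument, and the kernel vector $A(y,y)-\tfrac32\lambda(y)\,y$ of $F_y$ all appear in the paper's argument in the same roles. The only (harmless) divergence is the final contradiction: where the paper substitutes the degenerate form of $A$ into the other polarization identity to force $F_{uu}\equiv0$, you substitute $A(y,y)=\tfrac32 c(y)y$ back into the original cubic identity to conclude $c\equiv0$ and hence $A=0$ directly, which is a slightly shorter way to finish.
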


\begin{proof}
The differential of $\hat\phi$ at a point $w\in W$ is given by the
Hessian of $F$ or, equivalently, by the form $F_w$. Since, for $y\in
\Sm{\Hat Y}$, the affine Gauss fiber ${\hat\gamma}^{-1}
(\hat\gamma(y))$ coincides with the linear subspace $\<y,
\Ker{d_y\hat\phi}\>$, to prove the proposition it suffices to show
that, for a general point $y\in\Hat Y$, the form $F_y$ is degenerate,
i.e. there exists a nonzero vector $v\in W$ such that $F(y,v,w)=0$
for a general point $w\in W$.

Let $A\in\autp{\Hat Y}$ be a nonzero prolongation.
From~\eqref{e.p} it follows that, for any $u\in W$, the cubic forms
$F(A(u,w),w,w)$ and $F(w,w,w)$ define the same hypersurface
$Y\subset\P^N$. Therefore, for all $u,w\in W$,
\begin{equation}\label{e.f}
F(A(u,w),w,w)=\lambda(u)F(w,w,w),
\end{equation}
where $\lambda=\lambda^A\in W^*$. Replacing $w$ by $tu+sw$, we get an
identity of the form $\sum\limits_{i=0}^3c_i(u,w)t^is^{3-i}\equiv0$,
hence $c_i=0$ for all $i$, $0\le i\le3$. In particular, $c_0=0$ is
just~\eqref{e.f}, $c_3=0$ is a consequence of~\eqref{e.f} and
\begin{align}
c_1=0\,&\Rightarrow\, F(A(u,u),w,w)+2F(A(u,w),u,w)=3\lambda(u)F(u,w,w),
\label{e.1}\\
c_2=0\,&\Rightarrow\,2F(A(u,u),u,w)+F(A(u,w),u,u)\,\,=3\lambda(u)F(u,u,w).
\label{e.2}
\end{align}

Assuming now that $u=y\in\Sm{\Hat Y}$ and using~\eqref{e.p}, we see
that from~\eqref{e.2} it follows that $2F(A(y,y),y,w)=
3\lambda(y)F(y,y,w)$ for all $y\in\Sm{\Hat Y}$ and $w\in W$. In other
words, $F(y,v,w)=0$ for all $w\in W$, where $v=A(y,y)-\mu(y)y\in
\Ker{d_y\hat\phi}$ and $\mu=\frac32\lambda$. Thus to prove the
proposition it remains to show that $v\ne0$.

Suppose to the contrary that $A(y,y)=\mu(y)y$ for all $y\in\Sm{\Hat
Y}$ with $\mu=\mu^A\in W^*$. Since $\overline{\Sm{\Hat Y}}=\Hat Y$,
the irreducible cubic $\Hat Y$ is contained in the quadric defined by
the equation $A(w,w)-\mu(w)w=0$, hence $A(w,w)-\mu(w)w=0$ for all
$w\in W$, from which it follows that
\begin{multline}
A(u,w)=\frac{A(u+w,u+w)-A(u,u)-A(w,w)}2\\
=\frac{\mu(u)w+\mu(w)u}2,\quad\forall\,u,w\in W.\label{e.g}
\end{multline}
Since $A\ne0$, from~\eqref{e.g} it follows that $\mu$ and $\lambda$
are nonzero elements of $W^*$. Substituting \eqref{e.g}
in~\eqref{e.1}, we get
\begin{multline}
F(A(u,u),w,w)+2F(A(u,w),u,w)\\
=\mu(u)F(u,w,w)+\mu(u)F(u,w,w)+\mu(w)F(u,u,w)\\
=2\mu(u)F(u,w,w)+\mu(w)F(u,u,w)\\
=3\lambda(u)F(u,w,w)=2\mu(u)F(u,w,w),
\end{multline}
hence $F_{uu}\equiv0$ and we arrive at a contradiction since $F_{uu}$
vanishes only on $\Sing{\Hat Y}$.
\end{proof}

\begin{remark}\label{r.H} Proposition~\ref{p.DP} was proved as Corollary~4.5
in \cite{H} under the additional assumption that $Y$ is not polar
defective. However from Proposition~\ref{p.PD} it follows that polar
defective hypersurfaces are a fortiori dual defective.
\end{remark}

Existence of nonzero prolongations of $\aut{\hat V}$ imposes strong
geometric restrictions on a smooth nondegenerate projective variety
$V\subset\P W$. For example, from \cite{HM} it follows that there
exist lots of $\C^*$-actions on $V$ the closures of whose orbits are
conics in $\P W$. In particular, this implies that $V$ is conic
connected. Complete classification of such varieties $V$ is carried
out in \cite{FH1} and \cite{FH2}. In the case when $V$ is linearly
normal, this classification is as follows.

\begin{theorem}[Theorem 7.13 in \cite{FH2}]\label{t.FH}
Let $V\subsetneq\P W$ be a nondegenerate irreducible smooth linearly
normal variety such that $\autp{\hat V}\ne0$. Then $V\subset\P W$ is
projectively equivalent to one of the following varieties.
\begin{itemize}
\item[(1)] A rational homogeneous variety from the following
list{\,\rm:}
$$
v_2(\P^m),\quad\P^a\times\P^b,\quad G(1,m),\quad Q^m,\quad
\S_5,\quad\mathbb{OP}^2,
$$
viz. the second Veronese embedding of $\P^m$, the Segre embedding
of $\P^a\times\P^b$, the Pl\"ucker embedding of the Grassmann
variety of lines in $\,\P^m$, the $m$-dimensional nonsingular
quadric, the $10$-dimensional spinor variety $\S_5\subset\P^{15}$
parametrizing $4$-dimensional linear subspaces from one family on
the $8$-dimensional quadric, and the $16$-dimensional Cayley
plane in $\P^{26}${\rm;}
\item[(2)] A smooth section of $G(1,4)\subset\P^9$ by a linear
subspace of codimension $1$ or $\,2$ in $\,\P^9${\rm;}
\item[(3)] The section of $\,\S_5\subset\P^{15}$ by a linear
subspace $L$ of codimension $1$, $2$ or $3$ in the ambient
$\P^{15}$ which is general in the sense that $L\cap\S_5$ is
smooth and if $\codim_{\P^{15}}{L}>1$,  $L$  contains
one of the $10$-dimensional family of $\P^4$'s lying on
$\S_5${\rm;}
\item[(4)] The blowup ${\rm Bl}_{\P^s}(\P^m)$ embedded by the linear
system of quadric hypersurfaces in $\P^m$ passing through $\P^s$.
\end{itemize}
\end{theorem}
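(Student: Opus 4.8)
The plan is to convert the analytic hypothesis $\autp{\hat V}\ne0$ into rigid projective geometry, force $V$ into a known classification, and then read off the list by a direct computation of prolongations. The first geometric consequence is already recorded just above the theorem: by the construction of \cite{HM}, a nonzero $A\in\autp{\hat V}$ generates a family of $\C^*$-actions whose generic orbit closures are conics through a general point, so $V$ is conic-connected. Combined with the standing hypotheses that $V\subsetneq\P W$ is smooth, nondegenerate and linearly normal, this is very restrictive, and I would feed it into the Ionescu--Russo classification of conic-connected manifolds. This already cuts the candidates down to a short list of families built from Veronese and Segre varieties, Grassmannians, quadrics, spinor varieties and the Cayley plane, together with their smooth linear sections and the quadric-blowups ${\rm Bl}_{\P^s}(\P^m)$.

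To organise the search I would exploit the graded Lie algebra attached to a prolongation. The Tanaka prolongation procedure turns $\mathfrak g_{-1}=W$, $\mathfrak g_0=\aut{\hat V}$ (which contains the Euler field $\id$) and $\mathfrak g_1=\autp{\hat V}$, together with the higher prolongations, into a graded Lie algebra $\mathfrak g=\bigoplus_{k\ge-1}\mathfrak g_k$ with $\mathfrak g_1\ne0$, the degree conditions being exactly \eqref{e.p}. When $V$ is homogeneous its minimal embedding makes $W$ an irreducible $\mathfrak g_0$-module, and I would appeal to the classical classification (Kobayashi--Nagano) of irreducible reductive $\mathfrak g_0\subset\gl{W}$ with nonzero first prolongation: these are the isotropy representations of irreducible Hermitian symmetric spaces, and $V$ is the corresponding highest-weight orbit, which reproduces exactly family~(1) (the conformal orthogonal case giving the quadric $Q^m$). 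The explicit prolongations here are supplied by the associated Jordan-algebra models, in the spirit of \cite{LM}.

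The non-homogeneous candidates surviving conic connectedness are the smooth linear sections of $G(1,4)$ and of $\S_5$ in families~(2),(3) and the quadric-blowups in family~(4). For each of these I would verify $\autp{\hat V}\ne0$ directly by exhibiting a solution of \eqref{e.p} from the concrete defining equations. For the converse—that nothing else survives—I would pass to the variety of minimal rational tangents $\mathcal C_x\subset\P(T_xV)$ at a general point: a nonzero prolongation of $\aut{\hat V}$ should restrict to a nonzero prolongation of the symmetry algebra of the cone over $\mathcal C_x$, so one obtains an induction on $\dim V$ whose base cases are linear spaces and quadrics. Running this induction against the conic-connected list forces $V$ into precisely the stated families.

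The main obstacle is the non-homogeneous bookkeeping. The precise codimension ranges together with the genericity and containment conditions imposed in~(2) and~(3) are exactly the thresholds at which a prolongation persists under taking linear sections, and verifying these thresholds—rather than merely the extreme cases—is delicate; isolating the blowup family~(4) as the unique non-homogeneous, non-section example is equally subtle. Two features demand care throughout: linear normality, which is what prevents isomorphic projections from inflating the list and is genuinely used to discard spurious candidates, and the fact that the cone over $\mathcal C_x$ need not itself be linearly normal, so the inductive hypothesis cannot be applied naively and must be supplemented with the explicit geometry of the VMRT in each family.
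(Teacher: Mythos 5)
The paper does not prove this statement: it is quoted verbatim as Theorem~7.13 of \cite{FH2}, which in turn rests on the classification machinery of \cite{FH1}. So there is no internal argument to compare yours against, and your sketch has to stand on its own as a reconstruction of the Fu--Hwang proof. As such it is a roadmap rather than a proof. Your opening move is broadly the right one --- the $\C^*$-actions of \cite{HM} attached to a nonzero prolongation do yield conic-connectedness, and Fu--Hwang do exploit the structure induced on the variety of minimal rational tangents --- but the step ``feed conic-connectedness into the Ionescu--Russo classification'' does not do what you need. That classification only pins down conic-connected manifolds of Picard number at least two; for Picard number one it merely asserts that $V$ is Fano of large index, which is a vastly larger class than the six homogeneous varieties of family~(1) (it contains, for instance, all smooth complete intersections of sufficiently low degree). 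So conic-connectedness alone cannot ``cut the candidates down to a short list,'' and the heart of the classification is missing at exactly this point.

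The other load-bearing assertions are likewise theorems in their own right rather than observations. That a nonzero $A\in\Hom(\Sym^2W,W)$ prolonging $\aut{\hat V}$ induces a nonzero prolongation on the cone over $\mathcal C_x$ is one of the main technical results of \cite{FH1}, and your induction is not actually set up: you do not state the inductive hypothesis, nor explain why, say, a codimension-$3$ linear section of $\S_5$ survives while a codimension-$4$ one does not --- yet these thresholds are precisely the content of items~(2) and~(3). The Kobayashi--Nagano input for the homogeneous case is correct in spirit (family~(1) is exactly the list of irreducible compact Hermitian symmetric spaces other than projective space, in their minimal embeddings), but invoking it presupposes that $V$ is homogeneous and that $W$ is an irreducible $\aut{\hat V}$-module, which is part of what must be proved. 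Finally, isolating the blowups of family~(4) and the genericity and containment conditions in~(2)--(3) is the substance of the type-$(2,1)$ birational-transformation analysis in \cite{FH2}, which you defer entirely. In short, the skeleton resembles the actual Fu--Hwang strategy, but every joint of the skeleton is an unproved major step, and one of them (the reduction via conic-connectedness) fails as stated.
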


Now a straightforward computation of the dimension of the secant
varieties of the varieties from the above list yields the following

\begin{corollary} [Theorem 2.1 in \cite{H}]\label{t.SeveriProlong}
Let $V\subset\P W$ be a nondegenerate irreducible linearly normal
smooth subvariety with $\autp{\hat V}\ne0$. If $SV\subset\P W$ is a
hypersurface, then $V\subset\P W$ is a Severi variety.
\end{corollary}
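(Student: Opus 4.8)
The plan is to derive Corollary~\ref{t.SeveriProlong} from the classification in Theorem~\ref{t.FH} by a case-by-case check of which varieties on the list have a secant variety that is a hypersurface. The key numerical constraint comes from the general upper bound on the dimension of a secant variety: for a smooth nondegenerate $V^m\subset\P W=\P^N$ one always has $\dim SV\le\min\{2m+1,N\}$, and the condition that $SV$ be a hypersurface means $\dim SV=N-1$. Combining this with Zak's theorem quoted in the Introduction (namely $SV=\P^N$ whenever $m>\frac{2N-4}3$, together with the fact that $SV\ne\P^N$ forces $m\le\frac{2N-4}3$), the borderline case $\dim SV=N-1$ is extremely restrictive: it essentially pins down the secant defect $\delta=2m+1-\dim SV$ and forces $m$ to sit at the Severi threshold or very near it. So the first step is to record these inequalities and observe that $SV$ being a hypersurface is a strong numerical condition.

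First I would go through list~(1) of Theorem~\ref{t.FH}. For each homogeneous variety the dimension of the secant variety is classically known: for $v_2(\P^m)$ one has $\dim Sv_2(\P^m)=2m$ (symmetric matrices of rank $\le 2$) in $\P^{\binom{m+2}{2}-1}$, which is a hypersurface exactly when $\binom{m+2}{2}-1=2m+1$, i.e. $m=2$, giving the Veronese surface; for $\P^a\times\P^b$ the secant variety (matrices of rank $\le 2$) is a hypersurface precisely when $a=b=2$, giving the Segre fourfold; for $G(1,m)$ the secant variety (alternating forms of rank $\le 4$) is a hypersurface only for $G(1,5)$; the quadric $Q^m$ already is a hypersurface but its secant variety fills $\P^{m+1}$ so it is not of the required type (or is excluded because $SV=\P W$), and similarly the spinor variety $\S_5$ has secant variety filling $\P^{15}$; finally $\mathbb{OP}^2\subset\P^{26}$ has its secant variety the cubic hypersurface of vanishing determinant. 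Thus within list~(1) the only varieties whose secant variety is a proper hypersurface are exactly the four Severi varieties.

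Next I would eliminate lists~(2)--(4). For the linear sections in~(2) and~(3), passing to a general linear section lowers the ambient dimension while (for a general section) not increasing the secant defect, so $SV$ cannot remain a hypersurface once one cuts down from the Severi cases $G(1,5)$ and $\S_5$; concretely, a codimension~$c\ge1$ linear section of $G(1,4)$ or $\S_5$ has secant variety filling its (smaller) ambient space, hence $SV=\P W$ rather than a proper hypersurface. For the blowups ${\rm Bl}_{\P^s}(\P^m)$ in~(4) embedded by quadrics through $\P^s$, a direct dimension count of the secant variety shows it fills the ambient projective space, so again $SV$ is not a proper hypersurface. In each case the conclusion is that the hypothesis $SV\subsetneq\P W$ being a hypersurface is incompatible with the variety lying in families~(2)--(4).

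The main obstacle I expect is the bookkeeping in families~(2)--(4): one must verify that these sections and blowups genuinely have $SV=\P W$ (equivalently secant defect zero relative to their own ambient space), since the numerical secant-defect bound alone does not immediately exclude a proper hypersurface without computing the actual secant dimension in each case. This requires either a direct parameter count for the secant variety of each listed variety or an appeal to the structure of these varieties as (sections of) Grassmannians and spinor varieties; the Severi-variety borderline in list~(1) is sharp, so one has to be careful to confirm that the near-threshold examples in~(2)--(4) all overshoot and fill the ambient space. Once these secant-dimension computations are assembled, the four surviving varieties are exactly the Severi varieties, which proves the Corollary.
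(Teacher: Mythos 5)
Your proposal follows essentially the same route as the paper: Corollary~\ref{t.SeveriProlong} is deduced from the classification in Theorem~\ref{t.FH} by computing the dimension of the secant variety of each listed variety, and the paper itself records no more than that this is ``a straightforward computation.'' Your treatment of list~(1) and of the linear sections in~(2)--(3) is correct (for the latter, Zak's bound $m\le\frac{2N-4}{3}$ for varieties with $SV\ne\P^N$ immediately forces $SV=\P W$). The one inaccuracy is in case~(4): it is not true that $S\bigl({\rm Bl}_{\P^s}(\P^m)\bigr)$ always fills its ambient space. Writing the embedding as the projection of $v_2(\P^m)$ from $\Lambda=\<v_2(\P^s)\>$, every secant line of the image lies in the projection of $S(\Lambda,Sv_2(\P^m))$, a cone with vertex $\Lambda$, whence $\dim SV\le\dim Sv_2(\P^m)=2m$; this is at most $N-2$ except when $s=m-2$ (where $N=2m$ and the resulting scroll does satisfy $SV=\P^{2m}$). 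So for most $(s,m)$ these varieties are excluded because $SV$ has codimension at least $2$, not because it fills $\P W$; the conclusion of your case check is unaffected, but this step should be corrected.
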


\begin{example}\label{e.nonzeroprolongation}
Let us explain why the varieties listed in Theorem~\ref{t.FH}\,(1)
have nonzero prolongations. Let $V^n\subset\P^N$ be a variety from
this list. By Theorem~3.8 in Chapter~III of \cite{Z1}, $V^n=
\psi(\P^n)$, where the rational map $\psi:\P^n\dasharrow\P^N$ is
given by the linear system of quadrics passing through a base locus
$B\subset\P^{n-1}\subset\P^n$, where $B=\emptyset\,$ for
$V=v_2(\P^m)$, $B=\P^{a-1}\sqcup\P^{b-1}$ for $V=\P^a\times\P^b$
(meaning that $B=\P^{a-1}\cup\P^{b-1}$ and $\P^{a-1}\cap\P^{b-1}=
\emptyset$), $B=\P^1\times\P^{m-2}$ (Segre embedding) for $V=G(1,m)$,
$B=Q^{m-2}$ for $V=Q^m$, $B=G(1,4)$ for $V=\S_5$, and $B=\S_5$ for
$V=\mathbb{OP}^2$. In coordinates, let $\P^{n-1}=\P U$, $\P^n=
\P(\C\oplus U)$ and $\P^N=\P W$, where $W=\C\oplus U\oplus K$, where
$U$ and $K$ are complex vector spaces. The linear system of quadrics
defining $\psi$ contains the reducible quadrics formed by $\P^{n-1}$
and an arbitrary hyperplane in $\P^n$, and $\psi(t:u)=
(t^2:tu:\sigma(u,u))$, where $\sigma:\Sym^2{U}\to K$ corresponds to
the linear system of quadrics in $\P^{n-1}$ passing through $B$. The
map $\psi$ identifies the affine space $U$ with an open subset of $V$
and, in the coordinates $(w_0,w_1,w_2)$ corresponding to the
decomposition $W=\C\oplus U\oplus K$, $V$ is locally defined by the
equations $w_0w_2=\sigma(w_1,w_1)$.

Now define $A:\Sym^2W\to W$ by
$$
A\bigl((w_0,w_1,w_2),(w'_0,w'_1,w'_2)\bigr)=\biggl(w_0 w'_0,
\frac{w_0w'_1+w'_0w_1}{2},\sigma(w_1,w'_1)\biggr).
$$
Then $A\ne0$ and in \cite[Proposition~7.11]{FH2} it is shown that,
for any $v\in\Hat V$ and $w\in W$, one has $A(v,w)\in T_{\Hat V,v}$,
i.e. $A\in \autp{\hat V}$.
\end{example}

\begin{remark}\label{r.FH}
In \cite[Proposition 3.4]{FH1} it is shown that for the varieties $V$
from Theorem~\ref{t.FH}\,(1) one actually has $\autp{\hat V}\simeq
W^*$.
\end{remark}

\begin{example}\label{e.TangentialSeveri}
Now let $V=V_i^{n_i}\subset\P^{N_i}$, $n_i=\dim{V_i}=2^i$, $N_i=
\frac{3n_i}2+2$ be the $i$-th Severi variety, $1\le i\le4$ (cf.
Remark~\ref{r.PD}), and let $Y=Y_i=SV_i$ be the corresponding cubic
hypersurface. Being special cases of Theorem~\ref{t.FH}\,(1), Severi
varieties have nonzero prolongations constructed in
Example~\ref{e.nonzeroprolongation}. Furthermore, the description of
the base loci $B$ given in Example~\ref{e.nonzeroprolongation} shows
that $\dim{U_i}=\dim{V_i}= n_i$, $Q=Q_i=\psi_i(\P U_i)\subset V_i$ is
a smooth quadric of dimension $n_i/2=2^{i-1}$, $y=y_i=\psi_i(\C)$ is
a point in $\<Q\,\>\setminus Q$, where $Q$ is the entry locus of $y$,
i.e. the locus of points $v\in V$ for which there exists a point
$v'\in V$ such that $\<v,v'\>\ni y$, and the hyperplane $H=T_{Y,y}$
is tangent to $V$ along $Q$ and to $Y$ along $\<Q\,\>$ (cf.~\cite[ch.
IV, Theorem~2.4]{Z1}). Conversely, by \cite[ch. III, Theorem~3.8]{Z1}
(or by the formula for $\psi$ given in
Example~\ref{e.nonzeroprolongation}), if $\pi:\P W \dasharrow\P^n$ is
the projection with center $\<Q\,\>=\P^{\frac n2+1}$, then
$\pi(H)=\P^{n-1}\subset\P^n$, $\pi\circ\psi=\id$,
$\pi_{|V}:V\dasharrow\P^n$ is the birational isomorphism inverse to
$\psi$, and $\pi_{|V\setminus H\cap V}:V\setminus H\cap V
\overset\sim\to U=\P^n\setminus \P^{n-1}$. Furthermore, if
$x={}^{\perp}H=\gamma(y)\in X=Y^*$, then, by duality,
${}^{\perp}T_{X,x}=\<Q\,\>$ and ${}^{\perp}\<Q\,\>=T_{X,x}\subset
SX=V^*$. Recall that $X$ is also a Severi variety (projectively
isomorphic to $V$) and the linear subspaces $T_{X,x}$ sweep out $SX$.
Hence, in particular, {\it a hyperplane $L$ is tangent to $V\!$ if
and only if it contains one of the quadratic entry loci of type $Q$}.

Fixing $Q=\psi(\P U)$ as above, we observe that the projection~$\pi$
establishes a projective equivalence between the hyperplanes in $\P
W$ passing through $Q$ and the hyperplanes in $\P(\C\oplus U)$, and
this equivalence is respected by the map $\psi:\P(\C\oplus U)
\dasharrow V$. Thus the argument in
Example~\ref{e.nonzeroprolongation} shows that for any $L\supset Q$
one has $\autp{\Hat V_L}\ne0$, where $V_L=L\cap V$.
\end{example}

The following result is well-known, but we include a proof for the
sake of completeness.
\begin{lemma} \label{l.linearnormal}
Let $V\subset \mathbb{P}^N$ be a regular linearly normal variety, let
$L\subset\P^N$ be a hyperplane, and let $V_L=L\cap V$. Then $V_L$ is
linearly normal in $L$.
\end{lemma}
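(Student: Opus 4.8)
The plan is to translate the geometric notion of linear normality into the surjectivity of restriction maps on global sections of $\cO(1)$, and then to deduce the statement for $V_L\subset L$ from the corresponding one for $V\subset\P^N$ by a short diagram chase. Recall that $V\subset\P^N$ being linearly normal means that the restriction map $H^0(\P^N,\cO_{\P^N}(1))\to H^0(V,\cO_V(1))$ is surjective, and that $V_L\subset L\cong\P^{N-1}$ is linearly normal precisely when $H^0(L,\cO_L(1))\to H^0(V_L,\cO_{V_L}(1))$ is surjective. I would first reduce to the case $V\not\subset L$, which is the only interesting one: in the application $V$ is nondegenerate, so no hyperplane contains it, and then $L$ cuts out on $V$ an effective Cartier divisor $V_L$, i.e. the section $s\in H^0(V,\cO_V(1))$ defining it is a nonzerodivisor.

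Next I would record the two standard exact sequences. On the ambient space,
\[
0\to\cO_{\P^N}\to\cO_{\P^N}(1)\to\cO_L(1)\to0,
\]
whose cohomology sequence shows that $H^0(\P^N,\cO_{\P^N}(1))\to H^0(L,\cO_L(1))$ is surjective because $H^1(\P^N,\cO_{\P^N})=0$. On $V$, multiplication by $s$ gives
\[
0\to\cO_V\xrightarrow{\,\cdot s\,}\cO_V(1)\to\cO_{V_L}(1)\to0,
\]
and its cohomology sequence shows that $H^0(V,\cO_V(1))\to H^0(V_L,\cO_{V_L}(1))$ is surjective as soon as $H^1(V,\cO_V)=0$. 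This last vanishing is exactly the regularity hypothesis on $V$, and it is the only place where regularity enters.

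Then I would assemble the commutative square of restriction maps with corners $H^0(\P^N,\cO_{\P^N}(1))$, $H^0(L,\cO_L(1))$, $H^0(V,\cO_V(1))$ and $H^0(V_L,\cO_{V_L}(1))$. The composite along the left-then-bottom path from $H^0(\P^N,\cO_{\P^N}(1))$ to $H^0(V_L,\cO_{V_L}(1))$ is surjective, being the composition of the linear normality map for $V$ with the restriction $H^0(V,\cO_V(1))\to H^0(V_L,\cO_{V_L}(1))$ established above. Since this composite equals the top-then-right path through $H^0(L,\cO_L(1))$, the final arrow $H^0(L,\cO_L(1))\to H^0(V_L,\cO_{V_L}(1))$ is forced to be surjective, which is exactly the linear normality of $V_L$ in $L$.

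The only genuinely nonformal ingredient will be the vanishing $H^1(V,\cO_V)=0$ furnished by regularity; once that is in hand, the proof is purely the two exact sequences and the chase. The secondary point to verify is that $s$ is a nonzerodivisor so that the sequence on $V$ is exact on the left, but this is immediate from $V$ being reduced and irreducible with $V\not\subset L$.
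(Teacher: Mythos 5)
Your proof is correct and is essentially the paper's argument in dual form: the paper runs the same two long exact sequences through the ideal sheaves $\mathcal I_V$ and $\mathcal I_{V_L}$ and concludes $H^1(L,\mathcal I_{V_L}(1))=0$, whereas you phrase everything as surjectivity of restriction maps on $H^0$ of $\cO(1)$ and chase the commutative square. The essential inputs are identical in both versions --- the vanishing $H^1(V,\cO_V)=0$ supplied by regularity and the linear normality of $V$ in $\P^N$ --- so there is nothing to add.
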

\begin{proof}
Let $\mathcal I_V$  (resp. $\mathcal I_{V_L}$) be the ideal sheaf
defining $V$ in $\P^N$ (resp. $V_L$ in $L$). The exact sequence
$$
0\to H^1(\P^N,\mathcal I_V)\to H^1(\P^N,\mathcal O_{\P^N})\to
H^1(V,\mathcal O_V)\to H^2(\P^N,\mathcal I_V)\to
H^2(\P^N,\mathcal O_{\P^N})=0
$$
yields $H^1(\P^N,\mathcal I_V)=H^2(\P^N,\mathcal I_V)=
H^1(V,\mathcal O_V)=0$, hence from the exact sequence
$$
H^1(\P^N,\mathcal I_V)\to H^1(\P^N,\mathcal I_V(1))\to
H^1(L,\mathcal I_{V_L}(1))\to H^2(\P^N,\mathcal I_V)
$$
it follows that $H^1(L,\mathcal I_{V_L}(1))\simeq
H^1(\P^N,\mathcal I_V(1))=0$.
\end{proof}

\begin{corollary}\label{c.phs}
Let $V\subset\P^N$ be a Severi variety, let $L\subset\P^N$ be a
hyperplane, and put $V_L=L\cap V$. The following conditions are
equivalent{\rm:}
\begin{itemize}
\item[a)] $V_L$ is singular {\rm(}i.e. $L$ is tangent to $V$,
viz. ${}^{\perp}L\in V^*${\rm);}
\item[b)] $\autp{\Hat V_L}\ne0$.
\end{itemize}
\end{corollary}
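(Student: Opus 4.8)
The plan is to prove Corollary~\ref{c.phs} by establishing the equivalence a) $\Leftrightarrow$ b) using the rich geometric structure of Severi varieties developed in Example~\ref{e.TangentialSeveri}, together with Lemma~\ref{l.linearnormal} and the prolongation construction of Example~\ref{e.nonzeroprolongation}.

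First I would prove the implication a) $\Rightarrow$ b). By the key fact highlighted in Example~\ref{e.TangentialSeveri}, a hyperplane $L$ is tangent to $V$ if and only if it contains one of the quadratic entry loci $Q$. So assume $L$ is tangent to $V$; then $Q\subset L$ for some such quadric $Q=\psi(\P U)$. The last paragraph of Example~\ref{e.TangentialSeveri} is precisely tailored to this situation: it observes that the projection $\pi$ with center $\<Q\,\>$ sets up a projective equivalence between hyperplanes through $Q$ and hyperplanes in $\P(\C\oplus U)$, compatibly with $\psi$, and concludes that $\autp{\Hat V_L}\ne0$ for any $L\supset Q$. Thus a) $\Rightarrow$ b) is essentially already recorded in Example~\ref{e.TangentialSeveri} and I would simply invoke it, perhaps spelling out that the prolongation $A$ built there via the restricted map $\sigma$ and the decomposition $W=\C\oplus U\oplus K$ descends to a nonzero prolongation on $\Hat V_L$.

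The substantive direction is b) $\Rightarrow$ a), which I would prove contrapositively: if $L$ is \emph{not} tangent to $V$, then $V_L=L\cap V$ is smooth, and I must show $\autp{\Hat V_L}=0$. The idea is to apply the classification of Theorem~\ref{t.FH} (and its consequence Corollary~\ref{t.SeveriProlong}) to $V_L$. For this I first need $V_L$ to be a nondegenerate irreducible smooth linearly normal variety: smoothness follows from the assumption that $L$ is not tangent (so $V_L$ is a smooth hyperplane section), nondegeneracy and irreducibility are standard for a general-type hyperplane section of a Severi variety, and linear normality is supplied exactly by Lemma~\ref{l.linearnormal}, since Severi varieties are regular and linearly normal. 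With these hypotheses in place, $V_L$ falls under Theorem~\ref{t.FH}, so if $\autp{\Hat V_L}\ne0$ then $V_L$ must be one of the varieties on that list. The point is then to rule this out by a dimension count: $V_L$ has dimension $\dim V-1=\frac{3}{2}\Def - 1$ sitting in the hyperplane $L\cong\P^{N-1}$, and one checks that a smooth hyperplane section of a Severi variety cannot itself be projectively equivalent to any variety on the Theorem~\ref{t.FH} list with nonzero prolongation, because its secant variety $SV_L$ has the wrong codimension (it is no longer a hypersurface). Invoking Corollary~\ref{t.SeveriProlong}, if $\autp{\Hat V_L}\ne0$ and $SV_L$ were a hypersurface then $V_L$ would be a Severi variety, which is impossible by dimension; and if $SV_L$ is not a hypersurface one inspects the remaining list directly.

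The main obstacle will be the b) $\Rightarrow$ a) direction, specifically the bookkeeping needed to confirm that a smooth, non-tangent hyperplane section of each of the four Severi varieties genuinely fails to appear on the Theorem~\ref{t.FH} list with a nonzero prolongation. The cleanest route is to argue that such a $V_L$ has $SV_L=L$ (i.e. its secant variety fills the hyperplane), so it cannot be a Severi variety inside $L$ for dimension reasons, whence by Corollary~\ref{t.SeveriProlong} it cannot have nonzero prolongation while having a hypersurface secant variety; the non-tangency hypothesis is exactly what forces $SV_L$ to jump to full dimension, since tangency is equivalent to the secant defect persisting. I would therefore phrase the heart of the proof as: \emph{non-tangency of $L$} $\Rightarrow$ \emph{$V_L$ is a smooth linearly normal variety whose secant variety is not a hypersurface and which is not a Severi variety} $\Rightarrow$ \emph{$\autp{\Hat V_L}=0$ by Theorem~\ref{t.FH} and Corollary~\ref{t.SeveriProlong}}, leaving only the verification that the excluded list members are incompatible with being such a hyperplane section.
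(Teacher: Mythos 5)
Your overall architecture coincides with the paper's: a) $\Rightarrow$ b) is read off from Example~\ref{e.TangentialSeveri} (a tangent hyperplane contains an entry-locus quadric $Q$, and the prolongation of Example~\ref{e.nonzeroprolongation} restricts to any $L\supset Q$), and b) $\Rightarrow$ a) is proved contrapositively by showing that a smooth $V_L$ is linearly normal via Lemma~\ref{l.linearnormal} and then excluding it from the classification of Theorem~\ref{t.FH}. The paper's own exclusion step is the one you relegate to a fallback case: it simply observes that the list in Theorem~\ref{t.FH} contains no hyperplane section of a Severi variety.

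There is, however, a genuine error in the concrete mechanism you propose for the exclusion. You assert that non-tangency of $L$ forces $SV_L$ to fill up $L$, ``since tangency is equivalent to the secant defect persisting.'' This is false: for any hyperplane $L$, tangent or not, one has $SV_L=SV\cap L=Y_L$, a cubic hypersurface in $L$ --- the paper records exactly this in Example~\ref{e.SectionSecSeveri} (``for $n>2$, $Y_L=SV_L$ because the entry loci have dimension $\frac n2>0$''). Indeed, a general point $y\in SV\cap L$ has an $\frac n2$-dimensional entry locus, which necessarily meets the hyperplane $L$, so $y$ lies on a secant of $V_L$; the secant defect drops by one but $SV_L$ remains a proper hypersurface in $L$. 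With $SV_L=L$ as you state it, Corollary~\ref{t.SeveriProlong} gives you nothing, and your argument stalls at ``inspect the remaining list,'' which you do not carry out. Ironically, the corrected fact rescues your route and makes it arguably cleaner than the paper's: since $SV_L$ \emph{is} a hypersurface in $L$ and $V_L$ is smooth, irreducible, nondegenerate and linearly normal, Corollary~\ref{t.SeveriProlong} would force $V_L$ to be a Severi variety in $L\simeq\P^{N-1}$, which is impossible because $\dim V_L=n-1$ while the Severi dimension in $\P^{N-1}$ would have to be $\frac{2(N-1)-4}{3}=n-\frac23$. So the flaw is a single wrong geometric assertion rather than a wrong strategy; but as written the key step fails.
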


\begin{proof} As we saw in Example~\ref{e.TangentialSeveri},
$\text{a)}\Rightarrow\text{b)}$. On the other hand, suppose that
$V_L$ is nonsingular, but $\autp{\Hat V_L}\ne0$. By
Lemma~\ref{l.linearnormal}, $V_L$ is linearly normal, hence $V_L$
satisfies the hypotheses of Theorem~\ref{t.FH}. However, the list of
varieties in Theorem~\ref{t.FH} does not contain hyperplane sections
of Severi varieties.
\end{proof}

\begin{proposition}\label{p.equalAut}
Let $V\subset\P^N=\P W$ be an irreducible smooth variety, and let $Y=
SV$ be its secant variety. Suppose that $V$ is an irreducible
component of $Y'=\Sing{Y}$. Then $\aut{\Hat Y}=\aut{\Hat V}$ and
$\autp{\Hat Y}=\autp{\Hat V}$.
\end{proposition}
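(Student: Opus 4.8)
The plan is to reduce the whole statement to the single Lie-algebra identity $\aut{\Hat Y}=\aut{\Hat V}$. The prolongation $\g^1$ of a subalgebra $\g\subset\gl W$ is determined by $\g$ alone, being the space of $A\in\Hom(\Sym^2 W,W)$ with $A(w,\cdot)\in\g$ for all $w\in W$; hence the moment we know that the subalgebras $\aut{\Hat Y},\aut{\Hat V}\subset\gl W$ coincide, their prolongations $\autp{\Hat Y}$ and $\autp{\Hat V}$ coincide as well, and the second assertion follows from the first. I will prove $\aut{\Hat Y}=\aut{\Hat V}$ by two inclusions, using throughout the standard identification of $\aut{\Hat Z}$, for a reduced cone $Z\subset W$, with the Lie algebra of its stabilizer $\{g\in\GL W\mid g(Z)=Z\}$: tangency of $g$ at every point of $\Sm{\Hat Z}$ is equivalent to the one-parameter group $\exp(tg)$ preserving $\Sm{\Hat Z}$, hence its closure $\Hat Z$, for all $t$.

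The inclusion $\aut{\Hat V}\subseteq\aut{\Hat Y}$ is the equivariant direction: if $g\in\aut{\Hat V}$, then $\exp(tg)$ preserves $\Hat V$, and since a linear map sends lines to lines it preserves the closure of the union of the secant lines of $\Hat V$, that is, the affine cone $\Hat Y$ over $Y=SV$; therefore $g\in\aut{\Hat Y}$. (One can see this purely infinitesimally via Terracini's lemma: a general smooth point $y\in\Hat Y$ has the form $y=av_1+bv_2$ with $v_1,v_2\in\Sm{\Hat V}$ and $T_{\Hat Y,y}=\<T_{\Hat V,v_1},T_{\Hat V,v_2}\>$, so $g(y)=a\,g(v_1)+b\,g(v_2)\in T_{\Hat Y,y}$ whenever $g(v_1)\in T_{\Hat V,v_1}$ and $g(v_2)\in T_{\Hat V,v_2}$.) For the reverse inclusion $\aut{\Hat Y}\subseteq\aut{\Hat V}$ I would argue that $g\in\aut{\Hat Y}$ forces $\exp(tg)$ to preserve $\Hat Y$, hence its singular locus $\Sing\Hat Y$, its reduction, and the finite set of irreducible components of the latter; as $\exp(0\cdot g)=\id$ and the induced action on this finite set is continuous in $t$, every component is individually preserved. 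In particular $\exp(tg)$ preserves the component $\Hat V$, and applying the stabilizer identification once more gives $g\in\aut{\Hat V}$.

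I expect the only genuine subtlety to lie in this reverse inclusion, precisely where the hypothesis that $V$ is an irreducible component of $Y'=(\Sing Y)_{\text{red}}$ is used: one must ensure that the automorphisms $\exp(tg)$ preserve the specific component $\Hat V$ rather than merely $\Sing\Hat Y$ as a whole, which is exactly what the connectedness of the one-parameter group guarantees. (Had I instead proceeded infinitesimally, the corresponding point would be the verification that at a general $v\in V$ one has $T_{\Hat V,v}=T_{(\Sing\Hat Y)_{\text{red}},v}$, since $\Hat V$ is then the unique component of $(\Sing\Hat Y)_{\text{red}}$ through $v$ and $v$ is a smooth point of both; the tangency of $g$ to $(\Sing\Hat Y)_{\text{red}}$ then yields $g(v)\in T_{\Hat V,v}$, and density in $\Sm{\Hat V}$ completes the argument.) Everything else is formal.
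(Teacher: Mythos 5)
Your proposal is correct and follows essentially the same route as the paper: the inclusion $\aut{\Hat V}\subseteq\aut{\Hat Y}$ via the one-parameter subgroup preserving secant lines, the reverse inclusion via the fact that a connected family of automorphisms of $\Hat Y$ preserves each irreducible component of its singular locus individually, and the prolongation statement as a formal consequence of the equality of the Lie algebras. The extra details you supply (the stabilizer identification, the continuity-from-$t=0$ argument for components, and the Terracini alternative) are consistent with, and merely expand on, what the paper leaves implicit.
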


\begin{proof}
Let $g\in\aut{\Hat V}\subset\gl{W}$, and let $\xi_g\subset\GL{\,W}$
be the $1$-parameter subgroup generated by $g$. Since ${\Hat V}$ is
invariant with respect to $\xi_v$, the same is true for $\Hat Y=
S\Hat V$, hence $v\in\aut{\Hat Y}$. Conversely, continuous families
of automorphisms of $\Hat Y$ preserve every irreducible component of
its singular locus $Y'$, which proves the first claim. The second
claim follows from the first one by the definition of prolongations.
\end{proof}

\begin{corollary}\label{c.pSeveri}
Let $V\subset\P^N$ be a Severi variety, and let $Y=SV$ be its secant
variety. Then $\autp{\Hat Y}\ne0$.
\end{corollary}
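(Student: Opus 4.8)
The plan is to obtain the corollary as a direct assembly of two facts established earlier in the paper, so that essentially no new computation is required. First I would use the classification-based existence of prolongations for Severi varieties: a Severi variety $V$ is one of the four homogeneous varieties $v_2(\P^2)$, $\P^2\times\P^2$, $G(1,5)$, $\mathbb{OP}^2$ listed in Theorem~\ref{t.FH}\,(1), and for each of these the explicit prolongation constructed in Example~\ref{e.nonzeroprolongation} is nonzero. (Alternatively one may invoke Remark~\ref{r.FH}, which records the stronger fact that $\autp{\Hat V}\simeq W^*$ for the varieties of Theorem~\ref{t.FH}\,(1).) Either way, the input is $\autp{\Hat V}\ne0$.

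Next I would verify that the hypothesis of Proposition~\ref{p.equalAut} is met, namely that $V$ is an irreducible component of $Y'=\Sing{Y}$ for $Y=SV$. This is exactly the structural description recalled in the introduction and in Remark~\ref{r.PD}: the secant variety $Y=SV$ is an irreducible cubic hypersurface (the vanishing locus of the determinant of the associated $3\times3$ Hermitian matrix) whose singular locus is precisely the rank-one locus, which is the Severi variety $V$ itself. Hence $Y'=V$ is irreducible, and in particular $V$ is its unique irreducible component. With this in hand, Proposition~\ref{p.equalAut} applies directly and gives $\autp{\Hat Y}=\autp{\Hat V}$, whence $\autp{\Hat Y}\ne0$ by the first step.

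Since the proof is merely a combination of Proposition~\ref{p.equalAut} with the known existence of prolongations for the varieties of Theorem~\ref{t.FH}\,(1), there is no serious obstacle. The only point that genuinely needs to be invoked is the identification $\Sing{Y}=V$, which guarantees that $V$ occurs as an irreducible component of $Y'$ and thus that Proposition~\ref{p.equalAut} is applicable; this follows at once from the classical determinantal geometry of secant varieties of Severi varieties.
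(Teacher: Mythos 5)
Your proposal is correct and follows exactly the paper's own argument: it combines the nonzero prolongation of $\autp{\Hat V}$ from Example~\ref{e.nonzeroprolongation} with the identification $Y'=V$ and Proposition~\ref{p.equalAut}. No differences worth noting.
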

\begin{proof} It is well known that for Severi varieties $Y'=V$.
Hence the claim follows from Example~\ref{e.nonzeroprolongation} and
Proposition~\ref{p.equalAut}.
\end{proof}

We are now ready to prove the Main Theorem.

\begin{theorem}\label{t.Severi}
Let $Y\subset\P W$ be an irreducible cubic hypersurface.  Assume that
\begin{itemize}
\item[a)] $Y$ is not polar defective{\rm;}
\item[b)] the singular locus $Y'$ is smooth{\rm;}
\item[c)] $\autp{\hat Y}\ne0$.
\end{itemize}
Then $Y$ is the secant variety of a Severi variety.
\end{theorem}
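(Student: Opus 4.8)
The plan is to realize the two‑step strategy announced in the introduction, feeding the three hypotheses into the results already established. First, hypothesis~(c) together with Proposition~\ref{p.DP} shows that $Y$ is dual defective. Combined with hypothesis~(a), that $Y$ is not polar defective, and hypothesis~(b), that $Y'$ is smooth, this places us precisely in the setting of Corollary~\ref{c.main}; applying it produces an irreducible component $V:=Y'_0\subset Y'$ with $Y=SV$. This disposes of the first half of the strategy and reduces the theorem to the purely projective statement that $V$ must be a Severi variety.

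For the second half I would verify that $V$ meets all the hypotheses of Corollary~\ref{t.SeveriProlong}. As an irreducible component of the smooth variety $Y'$, the variety $V$ is smooth and irreducible. It is nondegenerate, since $\langle V\rangle=\langle SV\rangle=\langle Y\rangle=\P W$, the last equality holding because an irreducible hypersurface of degree $3>1$ cannot lie in a hyperplane. By construction $SV=Y$ is a hypersurface. Finally, because $V$ is an irreducible component of $Y'=\Sing Y$ and $Y=SV$, Proposition~\ref{p.equalAut} yields $\autp{\hat V}=\autp{\hat Y}\ne0$. Thus $V$ satisfies every hypothesis of Corollary~\ref{t.SeveriProlong} with the single possible exception of linear normality.

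The remaining point — and the one I expect to be the main obstacle — is therefore to establish that $V$ is linearly normal, after which Corollary~\ref{t.SeveriProlong} immediately identifies $V$ as a Severi variety and finishes the proof. It is useful to split by $\dim V$. If $\dim V=\frac{2N-4}{3}$, then, since $SV=Y\ne\P W$, the variety $V$ is by definition a Severi variety and we are already done. So the real content lies in the range $\dim V<\frac{2N-4}{3}$, equivalently $N>\frac32\dim V+2$, which is precisely the high‑codimension range in which Zak's linear normality theorem gives no information, so that linear normality is genuinely not automatic. The approach I would take is to pass to the linearly normal model $\tilde V\subset\P\tilde W$ and the isomorphic projection $\pi$ onto $V$, whose centre is disjoint from $\tilde V$ and from its secant variety; a dimension count then shows that if $V$ were not linearly normal the secant variety $S\tilde V$ would have codimension at least two in $\P\tilde W$. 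One brings in the classification of Theorem~\ref{t.FH} for $\tilde V$ and argues that no such model admits an isomorphic projection whose secant variety is a cubic hypersurface singular exactly along the smooth image $V$. The crux of this step is to control how the prolongation and the secant variety behave under $\pi$ — concretely, to show that the nonzero prolongation recorded in $\autp{\hat V}$ forces the projection to be trivial, so that $V=\tilde V$ is already linearly normal. Once linear normality is secured, a single application of Corollary~\ref{t.SeveriProlong} completes the proof.
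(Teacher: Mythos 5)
Your reduction is exactly the paper's: hypothesis~(c) plus Proposition~\ref{p.DP} gives dual defectivity, Corollary~\ref{c.main} gives $Y=SY'_0$, and Proposition~\ref{p.equalAut} transfers the nonzero prolongation to $\hat Y'_0$, so that everything hinges on linear normality of $V=Y'_0$ before Corollary~\ref{t.SeveriProlong} can be invoked. But that last step is precisely where your proposal stops being a proof. You correctly set up the linearly normal model $\tilde V\subset\P^{N+1}$ with centre of projection $z\notin S\tilde V$ and observe that $S\tilde V$ would have codimension $2$, but then you only describe a strategy --- ``bring in the classification of Theorem~\ref{t.FH} for $\tilde V$'' and ``show that the nonzero prolongation \dots forces the projection to be trivial'' --- without carrying it out. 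Worse, this route has an intrinsic difficulty you do not address: the hypothesis of Theorem~\ref{t.FH} is $\autp{\hat{\tilde V}}\ne0$, a condition on maps $\Sym^2\tilde W\to\tilde W$, whereas what you possess is a nonzero element of $\autp{\hat V}\subset\Hom(\Sym^2W,W)$ for the \emph{projected} variety; prolongations are not intrinsic to the abstract variety but depend on the embedding, and there is no evident way to lift one through an isomorphic projection. So the ``crux'' you flag is a genuine gap, not a routine verification.

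The paper closes this gap by a short, self-contained projective argument that makes no use of the classification. If $V$ were not linearly normal, then $S\tilde V$ has codimension $2$; one first shows $S^2\tilde V=\P^{N+1}$ (otherwise $S^2\tilde V=S(\tilde y,S\tilde V)$ would be a cone with vertex a general point $\tilde y\in\tilde V$, impossible for nondegenerate $\tilde V$). Hence the centre $z$ lies on a plane $\<\ty'_1,\ty'_2,\ty'_3\>$ spanned by three points of $\tilde V$, and its image is a trisecant line $l$ of $V$. Now the hypothesis that $Y$ is a \emph{cubic} enters decisively: $Y'$ is cut out by the partial derivatives of $F$, i.e.\ by quadrics, so a trisecant line must lie in $Y'_0$; pulling $l$ back under the isomorphism $\pi:\tilde V\to V$ gives a line through $\ty'_1,\ty'_2,\ty'_3$, contradicting their non-collinearity. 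You should supply this (or an equally complete) argument; as written, your proof establishes everything except the one point you yourself identify as the main obstacle.
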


\begin{proof}
By Proposition~\ref{p.DP}, $Y$ is dual defective.  By
Corollary~\ref{c.main}, $Y=SY'_0\,$ for some irreducible component
$Y'_0\subset Y'$. By Proposition~\ref{p.equalAut}, $\autp{\hat
Y'_0}=\autp{\hat Y}\ne0$.

We claim that $Y'_0$ is linearly normal. In fact, if $Y'_0$ fails to
be linearly normal, then there exist a nondegenerate variety
$\tY'_0\subset\P^{N+1}$ and a point $z\in\P^{N+1}\setminus S\tY'_0$
such that $\pi(\tY'_0)= Y'_0$, where $\pi=\pi_z:\P^{N+1}\dasharrow
\P^N$ denotes the projection with center $z$.  Since $Y=SY'_0$ is a
hypersurface, $\tilde Y=S\tilde Y'_0$ has codimension $2$ in
$\P^{N+1}$. If $S^2 \tilde Y'_0\subsetneq\P^{N+1}$, then $S^2\tilde
Y'_0= S(\tilde Y'_0,S\tilde Y'_0)$ is a hypersurface, hence, for a
general point $\tilde y\in\tilde Y'_0$, $S(\tilde Y'_0,S\tilde Y'_0)=
S(\tilde y,S\tilde Y'_0)$ is a cone with vertex $\tilde y$, which is
impossible since $\tilde Y'_0$ is nondegenerate. Thus $S^2\tilde Y'_0
=\P^{N+1}$. It follows that the point $z$ is contained in a
(nonempty) family of (possibly osculating) planes $\<\ty'_1,\ty'_2,
\ty'_3\>$, where $\ty'_i\in\tY'_0$, $1\le i\le3$ are (possibly
coinciding) points. Put $y'_i=\pi(\ty'_i)$, $1\le i\le3$. Then
$l=\<y'_1,y'_2,y'_3\>=\pi(\<\ty'_1,\ty'_2,\ty'_3\>)$ is a trisecant
line of $Y'_0$, and so $l\subset Y'_0$ since $Y'$ is defined by
quadratic equations. But by our hypothesis $\pi:\tY'_0\to Y'_0$ is an
isomorphism, and so $\tilde l=\pi^{-1}(l)\subset\tY'_0$ is a line,
contrary to the assumption that the points $\ty'_1,\ty'_2,
\ty'_3\in\tilde l$ are not collinear. Thus $Y'_0$ is linearly normal
and Theorem~\ref{t.Severi} follows from
Corollary~\ref{t.SeveriProlong}.
\end{proof}

We conclude with giving examples showing that none of the
hypotheses~a)--c) in the statement of Theorem~\ref{t.Severi} can be
lifted.

\begin{example}\label{e.SectionSecSeveri}
Let $V=V_i\subset\P^N$, $N=N_i$,  $i=1,\dots,4$  be a Severi variety
of dimension $n=n_i$, and let $Y=SV$ be its secant variety. Then
$X=Y^*\simeq V$ and $V^*\simeq SX$ (projective equivalence, cf.
\cite[ch. IV, (2.5.4)]{Z1}). There are three types of hyperplanes
$L\subset\P^N$ with respect to $V$ and $Y$ corresponding to the
filtration $X\subset SX\subset\P^{N\,{}^*}=\P W^*$ or, equivalently,
to the three orbits $O_1,O_2,O_3$ of the action of $\GL_3$ on $\P
W^*$ characterized by the rank of nonzero Hermitian $3\times3$
matrices over composition algebras over $\C$ making up $W^*$ (so that
$X=O_1$, $SX\setminus X=O_2$ and $\P W^*\setminus SX= O_3$). Let
$L\in\P^{N\,{}^*}$ be a hyperplane, and put $V_L=L\cap V$, $Y_L=L\cap
Y$. It is easy to see that in each of the three cases the cubic $Y_L$
is irreducible. Note that, for $n>2$, $Y_L=SV_L$ because the entry
loci have dimension $\frac n2>0$ (the same is true for $n=2$ if by
$V_L$ we understand the scheme theoretic intersection $V\cdot L$ and
define $SV_L=TV_L$ accordingly). In the following examples we study
properties of the cubic hypersurfaces $Y_L$ depending on the position
of $L$.

\begin{itemize}
\item[(i)] Suppose that $L$ is a general hyperplane, i.e. $L\in O_3$.
Then $Y_L$ is irreducible, $\Sing{Y_L}=Y'_L=L\cap Y'=L\cap
V=V_L$, and from Corollary~\ref{c.phs} and
Proposition~\ref{p.equalAut} it follows that $\autp{\Hat Y_L}=0$.
Furthermore, $\pDef{Y_L}=0$ by
Proposition~\ref{p.HyperplaneSection}, and it is clear that
$Y'_L$ is smooth; so conditions~a) and b) of the Main Theorem are
satisfied. This example shows that condition~c) of the Main
Theorem cannot be lifted. It is worthwhile to note that in this
example $Y'_L=V_L$ is a homogeneous variety, and so $\aut{Y_L}$
is big; nevertheless, $\autp{\Hat Y_L}=0$.

\item[(ii)] Suppose now that $L\in O_2$. Then $L$ is tangent to $V$
at a unique point $v\in V$ and, by
Example~\ref{e.TangentialSeveri}, $L\supset Q\ni v$, where
$Q\subset V$ is a nonsingular $\frac n2$-dimensional quadric.
Furthermore, $\Sing{Y_L}=Y'_L=V_L$, $Y_L$ is irreducible, and
from Corollary~\ref{c.phs} and Proposition~\ref{p.equalAut} it
follows that $\autp{\Hat Y_L} \ne0$. It is clear that
$\Def{Y_L}>0$, and from the proof of Proposition
\ref{p.HyperplaneSection} it follows that $\pDef{Y_L}=0$. Thus
conditions~a) and c) of the Main Theorem are satisfied. This
example shows that condition~b) of the Main Theorem cannot be
lifted.

\item[(iii)] Suppose finally that $L\subset\P W$ is most special,
i.e. $L\in O_1$. Then $L$ is tangent to $V$ along a nonsingular
$\frac n2$-dimensional quadric $Q$ (the entry locus of a smooth
point $y\in Y$; cf.~Example~\ref{e.TangentialSeveri}), and $L$ is
tangent to $Y$ along (i.e. at the nonsingular points of) the
$(\frac n2+1)$-dimensional linear subspace $\<Q\,\>$. Thus
$Y'_L=\Sing{Y_L}=V_L\cup\<Q\,\>$, and it is easy to see that
$Y''_L=V_L$. We note that the cubic $Y_L$ is irreducible since
otherwise $Y_L$ would be a union of a hyperplane and a quadric in
$L$ and, contrary to the above formula, $Y'_L$ would be
degenerate in $L$. From Corollary~\ref{c.phs} and
Proposition~\ref{p.equalAut} it follows that $\autp{\Hat
Y_L}\ne0$, and so condition~c) of the Main Theorem is satisfied.

Next we study condition~a) of the Main Theorem. We will show
that, unlike in the cases~(i) and~(ii), in this case
$\pDef{Y_L}=1$ and condition~a) of the Main Theorem fails. Let
$u\in L$ be a general point, and put $\bP_u=\<\cP_x,u\>$, where
$x=\gamma(y)$ and $\cP_x=\gamma^{-1}(x)=\<Q\,\>$; thus $\bP_u\subset L$
is a linear subspace of dimension $N-n=n/2+2$. Since $Y$ is a
cubic and $\bP_u\subset L$, we have $\bP_u\cdot Y_L= 2\cP_x+
M_u$, where $M_u$ is a hyperplane in $\bP_u$. Put
$T(Q,V)=\cup_{\q\in Q}T_{V,\q}$. Then $T(Q,V)\subset L$ and from
\cite[Ch.~I, Theorem~1.4]{Z1} it follows that $Y_L=T(Q,V)$, so
that a general point $\y\in M_u$ is contained in $T_{V,q}$ for
some $q\in Q$, whence $ T_{V,q}\cap\bP_u=\overline{T_{V,q}\cap
(\bP_u \setminus\cP_x)}\subseteq\overline{Y_L\cap(\bP_u\setminus
\cP_x)}=M_u$. Furthermore, $M_u\cap\cP_x=T_{V,q}\cap\cP_x=
T_{Q,q}$, hence the point $q$ with the above property is unique,
$M_u\cap Q=K_q$ is a cone with vertex $q$, and $M_u=
\<\y,T_{Q,q}\>=T_{V,q}\cap\bP_u$. From the description given in
Examples~\ref{e.nonzeroprolongation} and \ref{e.TangentialSeveri}
it follows that, if $\pi_{\cP_x}:\P^N\dasharrow\P^n$ is the
projection with center $\cP_x$, then $\pi_{\cP_x}(L)=\P^{n-1}
\subset\P^n$ and $\pi_{\cP_x}(V_L)=B$, where $B\subset\P^{n-1}$
is a proper subvariety explicitly described in the above
examples. Thus, if $\pi_{\cP_x}(u)\notin B$, then
set-theoretically $V\cap\bP_u=Q$, i.e. the polar map on $\bP_u$
is defined by a linear system of quadrics whose base locus is
supported on $Q$. Since $\bP_u$ is tangent to $V$ at $q$, on the
scheme level we have $\bP_u\cdot Y'=Q\cup\bq$, where $\bq$ is a
double point supported on $q$. The linear system of {\it all\/}
quadrics in $\bP_u$ passing through $Q$ yields a birational map
$\tilde\phi$ of $\bP_u$ onto a nonsingular quadric $\bQ_u$,
$\dim{\bQ_u}=\dim{\bP_u}=n/2+2$ (this is an easy special case of
Example~\ref{e.nonzeroprolongation}). Furthermore, as we saw in
Examples~\ref{e.nonzeroprolongation} and
\ref{e.TangentialSeveri}, the map $\tilde\phi$ contracts the
hyperplane $\cP_x$ to a point $\tilde x\in\bQ_u$ and blows up the
quadric $Q$ to the cone $\bQ_u\cap T_{\bQ_u,\tilde x}$, and the
inverse map $\tilde\psi:\bQ_u \dasharrow\bP_u$ is given by
projecting from $\tilde x$. Let $\tq$ be the point of the line
$\tilde\psi^{-1}(q)$ corresponding to $\bold q$. Adding the fat
point $\bq$ to the fundamental scheme of $\tilde\phi$ is clearly
equivalent to taking a composition of $\tilde\phi$ with the
projection $\pi_{\tq}:\bQ_u\dasharrow\bP_{\bq}$ to an
$(n/2+2)$-dimensional linear subspace $\bP_{\bq}\subset
\P^{N\,{}^*}$ with center at $\tq$. Thus $\phi_{|\bP_u}=
\pi_{\tq}\circ\tilde\phi$, $\phi(M_u)\subset\bP_{\bq}\cap X$ is a
nonsingular $\frac n2$-dimensional quadric $Q_u\ni x$, and
$\pi_{\tq}(\tilde\psi^{-1}(K_q))=T_{Q_u,x}$ is a linear subspace
of $T_{X,x}$.

Let $\q\in Q$ be an arbitrary point. Then, by the above,
$\phi(\<u,\q\>)$ is a line in $\bP_{\bq}$ joining the point
$\phi(u)$ with a point of $T_{Q_u,x}$. If $\q\in Q$ is general,
then $m_q=\<u,\q\>\cap M_u\notin\cP_x$ and $\phi(\<u,\q\>)\subset
\<\phi(m_q),T_{Q_u,x}\>\subset S(Q_u,T_{Q_u,x})\subset
S(X,T_{X,x})$. If $m_q\in K_q$, then the line $\phi(\<u,\q\>)$
meets $Q_u$ only at a point of the cone $T_{Q_u,x}\cap Q_u$, and
if $m_q=q$, then the line $\phi(\<u,\q\>)$ passes through $x$.
Varying $u$, we get an $(n-1)$-dimensional family of linear
subspaces $\bP_u$ sweeping out $L$, so from our analysis it
follows that $\phi(L)\subset S(X,T_{X,x})$. Since $X\subset
\P^{N\,{}^*}$ is also a Severi variety, from the Terracini lemma
it follows that, for a general $\x\in X$, $\dim{S(X,T_{X,x})}=
\dim{\<T_{X,\x},T_{X,x}\>}=\dim{SX}=N-1=\dim{\phi(L)}$. Thus
$S(X,T_{X,x})\subset\P^{N\,{}^*}$ is an irreducible hypersurface,
$\phi(L)= S(X,T_{X,x})$, and $\phi(L)$ is a cone with vertex
$T_{X,x}$. Recall that, as it was observed in the proof of
Proposition~\ref{p.HyperplaneSection}, $Z_L=\phi_L(L)=
\pi_x(\phi(L))$, hence $Z_L$ is a hypersurface in
$\P^{N-1\,{}^*}$ and $\pDef{Y_L}=1$. Let $X^L=\pi_x(X)$. Under
the projection $\pi_x:X\dasharrow X^L$, the point $x\in X$ is
blown up to an $(n-1)$-dimensional linear subspace $\Lambda
\subset X^L$, where $\Lambda= \pi_x(T_{X,x})$ (cf.
Example~\ref{e.TangentialSeveri}). Thus $Z_L=S(X^L,\Lambda)$ is
the cone with vertex $\Lambda$ over the variety
$\pi_{\Lambda}(X^L)=\pi_{T_{X,x}}(X)\subset \P^{n/2+1\,{}^*}$.
Furthermore, $X$ is a Severi variety, hence, in the notations of
Example~\ref{e.nonzeroprolongation}, the projection
$\pi_{T_{X,x}}:\P^{N\,{}^*}\dasharrow\P^{n/2+1\,{}^*}$ is nothing
else but the natural projection $\P W\dasharrow\P K$, and so the
base $\pi_{T_{X,x}}(X)$ of the cones $\phi(L)$ and $Z_L$ is a
nonsingular quadric of dimension $n/2$. It is also worth
mentioning that $X^L$ has an intrinsic definition in terms of
$Y_L$, viz. $X^L=Y_L^*$ (a similar fact for {\it general\/} $L$
was mentioned in the proof of Lemma~\ref{l.red}; since $Y_L$ is
irreducible, the same proof works in our setup).

Finally we turn to condition~b) of the Main Theorem. For $i=1$,
$X^L$ is a rational cubic scroll, $Q=V_L$ is a conic,
$Y'_L=\<Q\,\>\supset Q$ is a plane, and condition~b) of the Main
Theorem is satisfied. For $i>1$, $Y'_L$ is reducible: for $i=2$
it has three components ($\P^3$ and two Segre threefolds
$\P^1\times\P^2$, all meeting along $Q$), and for $i=3,4$ there
are two components ($\P^{n/2+1}$ and $V_L$ meeting along $Q$).
Thus, for $i>1$, $Y'_L$ is singular, viz. $\Sing{Y'_L}=\Sing{V_L}
=Q$ and condition~b) of the Main Theorem fails.

Summing up, the polar image of any hyperplane from the orbit
$O_1$ is a cone with vertex $\P^{n-1}$ over a nonsingular $\frac
n2$-dimensional quadric. This example (particularly for $i=1$)
shows that condition~a) of the Main Theorem cannot be lifted.
\end{itemize}
\end{example}

\begin{remark}\label{l.r.} In Proposition~\ref{p.HyperplaneSection}
we showed that a {\it general\/} hyperplane section of a hypersurface
$Y$ with nonvanishing Hessian also has nonvanishing Hessian.
Example~\ref{e.SectionSecSeveri}\,(iii) shows that this need not be
so for {\it special\/} hyperplane sections: both Lemma~\ref{l.L} and
Proposition~\ref{p.HyperplaneSection} may fail and the Hessian of a
special hyperplane section may vanish. The arguments we used in
studying Example~\ref{e.SectionSecSeveri}\,(iii) actually show that
the same is true for a wide class of dual defective cubics $Y$ if we
consider their intersection with the tangent hyperplane at a general
point $y\in Y$.

For $i=1$, Example~\ref{e.SectionSecSeveri}\,(iii) is just the
classical example of Gordan and Noether (cf. a detailed description
in \cite[Example~4.6]{Z2}). For $i=2$,
Example~\ref{e.SectionSecSeveri}\,(iii) was studied by a different
method in \cite[Example~6]{GR}, and the cases $i=3,4$ were mentioned
in passing in \cite[Remark~5.1]{GR}.
\end{remark}

\medskip

Baohua Fu (bhfu@math.ac.cn)

\noindent MCM, AMSS, Chinese Academy of Sciences, 55 ZhongGuanCun
East Road, Beijing, 100190, China and School of Mathematical
Sciences, University of Chinese Academy of Sciences, Beijing, China \\

Yewon Jeong (ywjeong@amss.ac.cn)

\noindent Morningside Center of Mathematics, AMSS,  Chinese Academy
of Sciences, 55 ZhongGuanCun East Road, Beijing, 100190, China \\

Fyodor L. Zak (zak@cemi.rssi.ru)

\noindent CEMI RAS, Nakhimovski\v{i} av. 47, Moscow 117418, RUSSIA

\end{document}